    \newtheorem{definition}{Definition}
    \newtheorem{lemma}{Lemma}
    \newtheorem{theorem}{Theorem}
    \newtheorem{proposition}{Proposition}
    \newcommand{\R}{{\mathbb R}}
\begin{document}

\title{The Affine BV-capacity}

\author[T. Wang]{T. Wang}
\address{Department of Mathematics and Information Science, Shaanxi Normal University, West Chang'an Street 620, 710119 Xi'an, Shaanxi, China  }
\email{tuowang734@gmail.com}
\thanks{Project supported by the Recruitment Program for Young Professionals}
\author[J. Xiao]{J. Xiao}
\address{Department of Mathematics and Statistics, Memorial University, St. John's, NL A1C5S7, Canada}
\email{jxiao@mun.ca}
\thanks{Project supported by NSERC of Canada as well as by URP of Memorial University, Canada.}
\begin{abstract}{This paper is devoted to a geometric-measure-theoretic study of the brand new affine BV-capacity which is essentially different from the classic BV-capacity in dimension greater than one.}
	\end{abstract}

\maketitle

\begin{CJK*}{GBK}{li}

\section*{Introduction}\label{s0}

As is well known, a function of bounded variation, simply a BV-function, is a real-valued function whose total variation is finite. In one variable, a function defined on an open interval of bounded variation is just a function with respect to which we can find the Riemann-Stieltjes integral of a continuous function on the interval. In more-than-one variables, a function defined on an open subset of $\mathbb R^{n\ge 2}$ is said to have bounded variation provided that its distributional derivative is a vector-valued finite Radon measure over the subset.

Importantly, the BV-functions form an algebra of discontinuous functions whose first derivative exists almost everywhere - thanks to this nature, this algebra is frequently utilized to define generalized solutions of nonlinear problems involving functional analysis, ordinary and partial differential equations, mathematical physics and engineering.

Even more importantly, the BV-functions can be used to define the perimeter $P_{BV}(E)=\|1_E\|_{BV(\mathbb R^n)}$ (cf. Section \ref{s11}) of an arbitrary set $E$ in $\mathbb R^n$ with the indicator $1_E$, and then to establish the classical isoperimetric inequality comparing the volume (or Lebesgue measure) $V(E)$ of $E$ and $P_{BV}(E)$:
\begin{equation}
\label{e01} \left(\frac{V(E)}{\omega_n}\right)^\frac{1}{n}\le \left(\frac{P_{BV}(E)}{n\omega_n}\right)^\frac{1}{n-1},
\end{equation}
where
$$
\omega_k=\frac{\pi^\frac{k}{2}}{\Gamma(1+\frac{k}{2})}\quad\hbox{and}\quad k\omega_k=\sigma_{k-1}
$$
are the volume of the unit ball $\mathbb B^k$ and the surface area of the unit sphere $\mathbb S^{k-1}$ in the $1\le k\le n$ dimensional Euclidean space $\mathbb R^k$, respectively - of course - $\Gamma(\cdot)$ is the standard gamma function. As an optimal consequence of \cite[Theorem 5.12.4]{Zi} (i.e. \cite[Theorem 4.7]{MZ}) where $\mu$ is taken as the $n$-dimensional Lebesgue measure on $\mathbb R^n$, not only \eqref{e01} is equivalent to the sharp BV-Sobolev inequality below:
\begin{equation}
\label{e02}
\frac{\|f\|_{L^\frac{n}{n-1}(\mathbb R^n)}}{\omega_n^{1-\frac1n}}\le \frac{\|f\|_{BV(\mathbb R^n)}}{n\omega_n},
\end{equation}
but also \eqref{e02} is equivalent to the following isocapacitary inequality linking $V(E)$ and the BV-capacity $C_{BV}(E)$ of $E$:
\begin{equation}
\label{e03}
\left(\frac{V(E)}{\omega_n}\right)^\frac{1}{n}\le \left(\frac{C_{BV}(E)}{n\omega_n}\right)^\frac{1}{n-1},
\end{equation}
where
$$
C_{BV}(E)=\inf\left\{\|f\|_{BV(\mathbb R^n)}: f\in BV(\mathbb R^n)\ \&\ f\ge 1\ \ \hbox{in}\ \ \hbox{a\ neighborhood\ of}\ E\right\}.
$$
Yet, from affine geometric perspective the BV-functions have been further exploited in \cite{Wang}, as an essential extension of \cite{Z} (whose partial consequences are \cite{CLYZ, LYZ00, LYZ02, LYZb, HS, HSX, Zh}), to establish the following affine Sobolev inequality (cf. Section \ref{s12}) which is stronger than \eqref{e02}:
\begin{equation}
\label{e04}
\frac{\|f\|_{L^\frac{n}{n-1}(\mathbb R^n)}}{\omega_n^{1-\frac1n}}\le\frac{\left(\int_{\mathbb S^{n-1}}\left(\int_{\mathbb R^n}|u\cdot\sigma_f|\,d|Df|\right)^{-n}\,\frac{du}{n\omega_n}\right)^{-\frac1n}}{2\omega_{n-1}}.
\end{equation}
Meanwhile, \eqref{e04} amounts to the following affine isoperimetric inequality connecting $V(E)$ and the volume $V(\Pi^\circ E)$ of the polar body $\Pi^\circ E$ of the projection body $\Pi E$ (cf. Section \ref{s11}):
\begin{equation}
\label{e05}
\big(V(E)\big)^{n-1}V(\Pi^\circ E)\le\big(\omega_n\omega^{-1}_{n-1}\big)^n\quad\hbox{i.e.}\quad
 \left(\frac{V(E)}{\omega_n}\right)^\frac1n\le \left(\frac{P_{BV,d}(E)}{2\omega_{n-1}}\right)^\frac1{n-1},
 \end{equation}
which is stronger than \eqref{e01}. For the related definitions see Section \ref{s1}. Surprisingly, as proved in \cite{X15} whose relatives are \cite{X07, X14, X14r, XZ}, the inequality \eqref{e05} for any compact domain $E$ with piece-wise $C^1$ boundary is equivalent to the affine isocapacitary inequality
\begin{equation}
\label{e06}
\left(\frac{V(M)}{\omega_n}\right)^\frac{1}{n}\le\left(\frac{C_{1,d}(M)}{2\omega_{n-1}}\right)^\frac1{n-1}
\end{equation}
for any compact domain $M\subset\mathbb R^n$ with piece-wise $C^1$ boundary, where
$$
C_{1,d}(K)=\inf\left\{\left(\int_{\mathbb S^{n-1}}\|u\cdot\nabla f\|_{L^1(\mathbb R^n)}^{-n}\,\frac{du}{n\omega_n}\right)^{-\frac1n}:\ f\in C_c^1(\mathbb R^n)\ \&\ f\ge 1_K\right\}
$$
is the affine $1$-capacity of a compact set $K\subset\mathbb R^n$ and $C_c^1(\mathbb R^n)$ denotes compactly supported functions as usual. Importantly, one has
\begin{equation}
\label{e07}
\frac{C_{1,d}(K)}{2\omega_{n-1}}\le\frac{C_{BV}(K)}{n\omega_n}\quad\forall\quad\hbox{compact}\ K\subset\mathbb R^n,
\end{equation}
which in turn implies that \eqref{e06} is stronger than \eqref{e03} for any compact domain $E$ with piece-wise $C^1$-boundary.

Now, two natural questions to ask are: 1) Can we define the affine BV-capacity $C_{BV,d}$ which generalizes $C_{1,d}$? 2) How much does $C_{BV,d}$ geometrically behave like $C_{BV}$? Needless to say, a settlement of this issue needs an innovative application of ideas from both convex geometry and calculus of variation. Therefore some fundamental materials from convex geometry and BV theory are collected in Section \ref{s1}. In Section \ref{s3}, the definitions and basic properties of $C_{BV,d}$ are given. In Section \ref{s4}, we prove that $C_{BV,d}$ does not increase under the Steiner symmetrization and its rounding. In Section \ref{s5} we show that the last mutual equivalence with rough constants can be recovered from a consideration of the affine traces involving $C_{BV,d}$, thereby investigating the affine $[1,\frac{n}{n-1})\ni q$-Cheeger constant.

\section{Preliminaries}\label{s1}

 \subsection{Basics regarding convex bodies}\label{s11}
 For quick later reference we collect some notation and basic facts about convex bodies,
 (see, e.g., \cite{G,S}).

For each integer $k=0,1,2,..., n$, the symbol $H^k$ stands for the $k$-dimensional Hausdorff measure. The standard inner product of the
 vectors $x,y\in\mathbb R^n$ is denoted by $x\cdot y$ and $|x|=\left(x\cdot x\right)^{1/2}$ is the Euclidean norm. We write $\mathbb S^{n-1} = \left\{x\in\mathbb R^n:  |x| = 1\right\}$ for the boundary
 of the Euclidean unit ball $\mathbb B^n$ in $\mathbb R^n$. We denote the area of unit sphere $\mathbb S^{n-1}$ by $n\omega_n$ and the volume of the Euclidean ball $\mathbb B^n$ by $\omega_n.$ We denote the ball with center $x\in\R^n$ and radius $r>0$ by $B_r(x)$.

 A convex body is a compact convex subset of $\mathbb R^n$ with non-empty interior. Each non-empty compact convex set $K$ is uniquely determined by its support function $h_K$, defined by
 $$
 h_K(x)=\max\left\{x\cdot y: y\in K\right\}\quad\forall\quad x\in\mathbb R^n.
 $$
 Note that $h_K$ is positively homogeneous of degree $1$ and sub-additive. Conversely, every function with these two properties is the support function of a unique compact convex set. We write $\mathcal K^n$ for the set of convex bodies in $\mathbb R^n$ endowed with the Hausdorff metric which is the metric induced by the maximum norm of the support function,
 by $\mathcal K^n_0$ the set of convex bodies containing origin in the interior.  If $K\in \mathcal K^n_0$ , then the polar body $K^\circ$ of $K$ is defined by
 $$
 K^\circ=\left\{x\in \mathbb R^n: x\cdot y\leq 1\quad\forall\quad y\in K\right\}.
 $$
 Let us denote the standard othonormal basis of $\mathbb R^n$ by $\left\{e_1,e_2,\cdots,e_n\right\}.$ Then the Steiner symmetral $S_{e_n}(K)$ of a set $K$ with respect to $e_n$ is defined by
 \begin{equation}\label{steiner symmetral}
 S_{e_n}(K)=\left\{x+te_n:-\frac{H^1(L_x^{e_n}\cap K)}{2}\leq t \leq \frac{H^1(L_x^{e_n}\cap K)}{2} \quad\&\quad x\in \mathbb R^{n-1}\times\left\{0\right\} \right\},
 \end{equation}
 where $L_x^{e_n}$ is the line passing through $x$ in the direction $e_n.$ The rounding $R(K)$ of a set $K$ is defined by
 \begin{equation}\label{rounding}
 R(K)=B_{r_K}(0)\ \ \mbox{~with~}\ \ V(B_{r_K}(0))=V(K).
 \end{equation}
 Let $K$ be a convex body in $\mathbb R^n$ and $\nu: \partial K \rightarrow \mathbb S^{n-1} $ the generalized Gauss map. For each Borel set $\omega\subset \mathbb S^{n-1}$, the inverse spherical image $\nu^{-1}(\omega)$ of $\omega$ is the set of all boundary points of $K$ which have an outer unit normal belonging to the set $\omega$ and we know from \cite{S} that $\nu^{-1}(\omega)$ is a measurable set. Associated with each convex body $K\in\mathcal K^n$ is a Borel measure $S_K(\cdot)$ on $\mathbb S^{n-1}$ called the surface area measure of $K$, defined by
 $$
 S_K(\omega) = H^{n-1}(\nu^{-1}(\omega)),
 $$
 for each Borel set $\omega\subset \mathbb S^{n-1}$, that is, $S_K(\omega)$ is the $(n-1)$-dimensional Hausdorff measure of the set of all points on $\partial K$ where some outer normal unit vector lies in $\omega$. The surface area of a convex body $K$ is defined to be
 $$
 S_K(\mathbb S^{n-1})=H^{n-1}(\partial K).
 $$
 Projection bodies were introduced by Minkowski at the turn of the last century and have proved to be very useful in many ways and subjects. They are defined in the following way. The projection body $\Pi K$ of $K\in\mathcal K^n$ is the convex body whose support function is given by
 $$
 h_{\Pi K}(v)=\frac{1}{2}\int_{\mathbb S^{n-1}}|u\cdot v|dS_K(u)\quad\forall\quad v\in\mathbb R^n.
 $$
 The projection operator $\Pi$ has strong contravariance and invariance properties: for all $\phi\in GL(n)$ and translation $\tau$, we have
 $$
 \Pi(\phi K)=|\det\phi|\phi^{-t}\Pi K\quad\&\quad \Pi(\tau K)=\Pi K\quad\forall\quad K\in \mathcal K^n.
 $$

 The Petty projection inequality for convex bodies states:
 \begin{theorem}\label{t0p}
 For $K\in \mathcal K^n$, we have
 $$
 V(K)^{n-1}V(\Pi^\circ K) \leq (\omega_n\omega^{-1}_{n-1})^n
 $$
 with equality if and only if $K$ is an ellipsoid.
 \end{theorem}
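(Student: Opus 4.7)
The plan is to prove Theorem~\ref{t0p} by Steiner symmetrization, with Euclidean balls as the extremal benchmark and the affine covariance $\Pi(\phi K)=|\det\phi|\phi^{-t}\Pi K$ promoting the ball case to every ellipsoid. First I would verify the equality directly for $K=r\mathbb{B}^n$: since $S_K$ equals $r^{n-1}$ times the spherical Lebesgue measure, $h_{\Pi K}(v)=\frac{r^{n-1}}{2}\int_{\mathbb{S}^{n-1}}|u\cdot v|\,du=r^{n-1}\omega_{n-1}|v|$, so $\Pi^\circ K=(r^{n-1}\omega_{n-1})^{-1}\mathbb{B}^n$ and $V(K)^{n-1}V(\Pi^\circ K)=(\omega_n\omega_{n-1}^{-1})^n$. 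The $GL(n)$-invariance of the functional $K\mapsto V(K)^{n-1}V(\Pi^\circ K)$, which follows directly from the above covariance and the change-of-variables formula, then spreads this equality to every ellipsoid.

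Second, I would reduce the general inequality to the ball case by iterated Steiner symmetrization. Volume preservation $V(S_u(K))=V(K)$ is immediate from \eqref{steiner symmetral}, and it is classical that for a suitable dense sequence of directions $(u_m)\subset\mathbb{S}^{n-1}$ the iterates $K_m:=S_{u_m}(\cdots S_{u_1}(K))$ converge in the Hausdorff metric to the Euclidean ball of radius $(V(K)/\omega_n)^{1/n}$. Since $K\mapsto\Pi K$ and $K\mapsto K^\circ$ are Hausdorff-continuous on $\mathcal{K}^n_0$, the theorem reduces to the key monotonicity
\begin{equation}\label{eq:petty-key}
V(\Pi^\circ S_u(K))\ge V(\Pi^\circ K),\qquad u\in\mathbb{S}^{n-1}.
\end{equation}

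Third, and this is where the main obstacle lies, I would establish \eqref{eq:petty-key}. Using the polar-coordinate formula $V(\Pi^\circ K)=\frac{1}{n}\int_{\mathbb{S}^{n-1}}h_{\Pi K}(v)^{-n}\,dv$ and the geometric identity $h_{\Pi K}(v)=H^{n-1}(K\,|\,v^\perp)$, the claim reduces to proving that $\int_{\mathbb{S}^{n-1}} H^{n-1}(K\,|\,v^\perp)^{-n}\,dv$ does not decrease upon replacing $K$ with $S_u(K)$. I would slice $\mathbb{S}^{n-1}$ by great circles lying in 2-planes containing $u$, pair each direction $v$ in such a plane with its reflection $\bar v$ across $u^\perp$, and express the two projection areas $H^{n-1}(K\,|\,v^\perp)$ and $H^{n-1}(K\,|\,\bar v^\perp)$ via Cavalieri along $u$ as integrals over $u^\perp$ of the chord lengths $H^1(L_x^u\cap K)$ appearing in \eqref{steiner symmetral}. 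Applying the 1-dimensional Brunn-Minkowski inequality to these chord-length functions, together with convexity of $t\mapsto t^{-n}$, then yields a paired inequality on each great circle which integrates to \eqref{eq:petty-key}. The subtle point is that Steiner symmetrization need not decrease $h_{\Pi K}(v)^{-n}$ pointwise: the inequality only emerges after the antipodal pairing.

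Finally, for the equality case, equality in Theorem~\ref{t0p} propagates back through the symmetrization chain, forcing equality in the 1-dimensional Brunn-Minkowski inequality on every great circle and for every direction $u\in\mathbb{S}^{n-1}$. Tracing this rigidity shows that, after an affine normalisation, $K$ must coincide with each of its Steiner symmetrizations; the classical characterization of universally Steiner-symmetric convex bodies as balls, combined with the $GL(n)$-covariance of the functional, identifies $K$ in the original frame as an ellipsoid.
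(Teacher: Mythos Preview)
The paper does not actually prove Theorem~\ref{t0p}: it is quoted in the preliminaries as the classical Petty projection inequality, with no argument supplied. So there is no ``paper's own proof'' to compare against. That said, the paper does carry out the analogous monotonicity step for the $BV$-perimeter version in Theorem~\ref{t31}, and the mechanism used there is instructive for evaluating your proposal.

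Your overall architecture (equality for balls, $GL(n)$-invariance extending to ellipsoids, iterated Steiner symmetrisation converging to a ball, reducing everything to the monotonicity \eqref{eq:petty-key}) is the standard and correct one. The issue is your third paragraph. You propose to obtain \eqref{eq:petty-key} by pairing $v$ with its reflection $\bar v$ across $u^\perp$, writing $H^{n-1}(K\,|\,v^\perp)$ via Cavalieri as an integral of chord lengths $H^1(L_x^u\cap K)$, and then invoking one-dimensional Brunn--Minkowski together with convexity of $t\mapsto t^{-n}$. This sketch does not actually close: the projection area $H^{n-1}(K\,|\,v^\perp)$ for an oblique $v$ is not a Cavalieri integral of the $u$-chord lengths of $K$, so the quantity on which you want to run Brunn--Minkowski is not the one that appears. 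A pairing-plus-convexity argument on $\int h_{\Pi K}^{-n}$ alone cannot work without a genuine set-theoretic comparison, because the integrand need not behave well pointwise or in pairs.

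The clean route---and the one the paper itself invokes (via Lemma~\ref{l31}(i), taken from \cite{LYZ00}) when proving the $BV$ analogue in Theorem~\ref{t31}---is to show the containment
\[
S_u\big((\Pi K)^\circ\big)\ \subseteq\ \big(\Pi\,S_u(K)\big)^\circ,
\]
equivalently, that $h_{\Pi K}(x,t)=1=h_{\Pi K}(x,-s)$ forces $h_{\Pi S_u K}\big(x,\tfrac{t+s}{2}\big)\le 1$. This is checked by a direct support-function computation (writing $h_{\Pi K}$ as an integral over the shadow $K\,|\,u^\perp$ of boundary contributions and using the triangle inequality), exactly as the paper does in the polyhedral step of Theorem~\ref{t31}. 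Since Steiner symmetrisation preserves volume, the containment immediately gives $V(\Pi^\circ K)=V\big(S_u(\Pi^\circ K)\big)\le V\big(\Pi^\circ S_u(K)\big)$, which is \eqref{eq:petty-key}. I would replace your third paragraph with this argument; your remaining steps are fine.
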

 This inequality is found to be stronger than the classical isoperimetric inequality for convex bodies and yet both sides of the inequality are invariant under affine transformations. Motivated by this, the affine surface area of $K\in\mathcal K^n$ is defined by
 $$
 I_1(K)=2\big(\omega_n^{-1}V(\Pi^\circ K)\big)^{-\frac1n} =\left(\int_{\mathbb S^{n-1}}\Big(\int_{\mathbb S^{n-1}}|u\cdot v|\,dS_K(v)\Big)^{-n}\,\frac{du}{n\omega_n}\right)^{-\frac{1}{n}},
 $$
 which induces another representation of the Petty projection inequality in Theorem \ref{t0p}:
 $$
 \left(\frac{V(K)}{\omega_n}\right)^\frac1n\le \left(\frac{I_1(K)}{2\omega_{n-1}}\right)^\frac1{n-1}.
 $$
 We choose this normalization here for convenience. We note that the affine surface area is invariant under translation and $SL(n)$ transformation (see, e.g. \cite[p.570]{S}).

 \subsection{The space $BV(\R^n)$ and its induced perimeters}\label{s12}
 Referring to \cite{Setsoffiniteperimeterandgeometricvariational}, we use
 $$
 C_c^\infty(\mathbb R^n);\quad C_c^1(\mathbb R^n);\quad Lip_c(\mathbb R^n)
 $$
 to represent the class of all compactly supported infinitely differentiable functions; all compactly supported continuously differentiable functions; all compactly supported Lipschitz continuous functions in the Euclidean space $\mathbb R^{n\ge 1}$. We say that an $L^1(\mathbb R^n)$-function $f$ is of bounded variation on $\mathbb R^n$, written as $f\in BV(\mathbb R^n)$, provided that there is a vector-valued Radon measure $Df=(D_jf)_{j=1}^n$ such that
 $$
 \int_{\mathbb R^n}f(x)\,\partial_{x_j}\phi(x)\,dx=-\int_{\mathbb R^n}\phi\,dD_jf\quad\forall\ \phi\in C^1_c(\mathbb R^n)\ \&\ j\in\{1,...,n\}.
 $$
 The variation measure $|Df|$ of a Borel set $E\subset \mathbb R^n$ is defined by
 $$
 |Df|(E)=\sup\left\{\sum_{h=1}^{\infty}|Df(E_h)|:E_h\cap E_k=\emptyset\ \ \&\ \ \bigcup_{h=1}^{\infty} E_h\subset E\right\}.
 $$
 Note that for each $f\in BV(\mathbb R^n)$, $Df$ has the Radon-Nikodym derivative $\sigma_f$ with respect to the non-negative Radon measure $|Df|$. So one has the following divergence formula:
 \begin{equation}
 \label{e11}
 \int_{\mathbb R^n}f(x)\,\hbox{div}\psi(x)\,dx=-\int_{\mathbb R^n}\psi\cdot\sigma_f\,d|Df|\quad\forall\ \psi=(\psi_j)_{j=1}^n\in C_c^1(\mathbb R^n)\times\cdots\times C_c^1(\mathbb R^n),
 \end{equation}
 whence defining the BV-norm
 $$
 \|f\|_{BV(\mathbb R^n)}=\sup_{\psi}\int_{\mathbb R^n}f(x)\,\hbox{div}\psi(x)\,dx
 $$
 where the supremum is taken over all
 $$
 \psi=(\psi_j)_{j=1}^n\in C_c^1(\mathbb R^n)\times\cdots\times C_c^1(\mathbb R^n)\quad\hbox{with}\quad\sup_{x\in\mathbb R^n}|\psi(x)|\le 1,
 $$
 and its affine variant discovered in \cite{Wang} by
 $$
 \|f\|_{BV,d(\mathbb R^n)}=\left(\int_{\mathbb S^{n-1}}\Big(\int_{\R^n}|u\cdot\sigma_f(x)|\,d|Df|\Big)^{-n}\,\frac{du}{n\omega_n}\right)^{-\frac{1}{n}}.
 $$
 In the above and below, $du$ represents the standard surface area measure on $\mathbb S^{n-1}$.

 Importantly, for any measurable set $E\subseteq\R^n$ we can define the perimeter $P_{BV}(E)=\|1_E\|_{BV(\R^n)}$ and its affine counterpart $P_{BV,d}(E)=\|1_E\|_{BV,d(\R^n)}$.
 A measurable set $E\subset\mathbb R^n$ has finite perimeter in $\R^n$ if
 $1_E\in BV(\R^n)$, i.e., $P_{BV}(E)<\infty$. For a set $E$ with finite perimeter, we say $x\in\partial^\star E,$ the reduced boundary of $E$, if
 \begin{itemize}
 \item[(i)] $|D1_E|(B_r(x))>0$ for all $r>0,$
 \item[(ii)] $\lim_{r\rightarrow0^+} \frac{D1_E(B_r(x))}{|D1_E|(B_r(x))}= \sigma_{1_E}(x),$
 \item[(iii)] $|\sigma_{1_E}(x)|=1.$
 \end{itemize}
 In this case we call the vector field $\nu_E =-\sigma_{1_E}$ the measure theoretic outer unit normal to $E$. Let $E$ be a set of finite perimeter, it is well known that
 $$
 D1_E = -\nu_EdH^{n-1}\lfloor\partial^{\star}E, ~~~|D1_E|(\R^n)=H^{n-1}(\partial^\star E).
 $$
 The projection body for sets of finite perimeter introduced in \cite{Wang} has proved to be a natural extension of the projection body for convex bodies. They are defined in the following way. The projection body $\Pi E$ with $1_E\in BV(\mathbb R^n)$ is the convex body whose support function is given by
 $$
 h_{\Pi E}(v)=\frac{1}{2}\int_{\partial^\star E}|\nu_E(x)\cdot v|dH^{n-1}(x)\quad\forall\quad v\in\mathbb R^n.
 $$
 Correspondingly, the affine surface area of $E$, i.e., the affine perimeter of $E$, is given by
 $$
 P_{BV,d}(E)=({2^n\omega_n})^{\frac{1}{n}}V^{-\frac{1}{n}}(\Pi^\circ E) =\left(\int_{\mathbb S^{n-1}}\Big(\int_{\partial^\star E}|\nu_E(x)\cdot u|\,dH^{n-1}(x)\Big)^{-n}\,\frac{du}{n\omega_n}\right)^{-\frac{1}{n}}.
 $$
 By the strict convergence topology we mean the topology induced by the distance
 $$
 d(f,g)=\int_{\R^n}|f(x)-g(x)|dx + \big||Df|(\R^n)-|Dg|(\R^n)\big|\quad\forall\quad f,g\in BV(\R^n).
 $$
 If $\rho$ is a smooth function on $\R^n$, satisfying the following three requirements
 \begin{itemize}
   \item [(i)] it is compactly supported,
   \item [(ii)] $\int_{\mathbb{R}^n}\rho(x)\mathrm{d}x=1,$
   \item [(iii)] $\lim_{\epsilon\to 0}\rho_\epsilon(x) = \lim_{\epsilon\to 0}\epsilon^{-n}\rho(x / \epsilon)=\delta(x).$
 \end{itemize}
 where $\delta(x)$ is the Dirac delta function and the limit is understood in the space of Schwartz distributions, then $\rho_{\epsilon}$ is an approximation to the identity.

Now, a combination of the H\"older inequality, the Fubini theorem, \eqref{e11} and \cite[Theorem 6.5]{Wang} implies
\begin{equation}
\label{eBVe}
 \left(\frac{2\omega_{n-1}}{n\omega_n}\right)\|f\|_{BV(\mathbb R^n)}\ge \left(\int_{\mathbb S^{n-1}}\Big(\int_{\R^n}|u\cdot\sigma_f(x)|\,d|Df|\Big)^{-n}\,\frac{du}{n\omega_n}\right)^{-\frac{1}{n}}\ge \left(\frac{2\omega_{n-1}}{\omega_n^{1-\frac1n}}\right){\|f\|_{L^\frac{n}{n-1}(\mathbb R^n)}}.
 \end{equation}
 So, we always have
 \begin{equation}
 \label{e12P}
 \frac{P_{BV,d}(E)}{2\omega_{n-1}}\le\frac{P_{BV}(E)}{n\omega_n}.
\end{equation}
This inequality \eqref{e12P} actually compares two perimeter-radii and hence is very natural due to the fact that
$$
2\omega_{n-1}=2H^{n-1}(\mathbb B^n\cap\mathbb R^{n-1})\quad\&\quad n\omega_n=H^{n-1}(\mathbb S^{n-1})
$$
and so $2\omega_{n-1}$ can be treated as the projection of $\sigma_{n-1}=n\omega_n$.

\section{Definitions and fundamentals of the affine BV-capacity}\label{s3}

\subsection{The original definition of $C_{BV,d}$}\label{s31} The concept of an affine BV-capacity is motivated by \cite[5.1]{Zi} (cf. \cite{HK, HKM, Maz}).

\begin{definition}\label{BVoriginaldefinition}
Given a subset $E\subset\R^n.$ Let $\mathscr A(E)$ be the class of all BV-functions $f$
with $f\geq1$ on a neighbourhood of $E$, and denote by
$$
C_{BV}(E)=\inf_{f\in\mathscr A(E)}\|f\|_{BV(\mathbb R^n)}\quad\hbox{and}\quad
C_{BV,d}(E)=\inf_{f\in\mathscr A(E)}\|f\|_{BV,d(\R^n)}
$$
the BV-capacity and the affine BV-capacity of $E$ respectively. From \eqref{eBVe} it follows that
\begin{equation}
\label{e12C}
\frac{C_{BV,d}(E)}{2\omega_{n-1}}\le\frac{C_{BV}(E)}{n\omega_n}
\end{equation}
always holds and both capacities coincide under $n=1$.
\end{definition}

\subsection{Two alternatives of $C_{BV,d}$ for compact sets}\label{s32} Thanks to Definition \ref{BVoriginaldefinition}, from now on the dimension $n$ is always assumed to be greater than $1$. Just like $C_{BV}$ (cf. \cite[5.12]{Zi} and \cite{Fl, LXZ}), a simple regularization argument and a geometric realization yield the following formulas for $C_{BV,d}(E)$ in case $E$ is compact.

\begin{theorem}\label{t21} Let $K$ be a compact subset of $\mathbb R^n$.

\begin{itemize}
\item[(i)] If $F(\mathbb R^n)$ is in $\Big\{C_c^\infty(\mathbb R^n), C_c^1(\mathbb R^n), Lip_c(\mathbb R^n)\Big\}$, then
\begin{equation}
\label{eq21}
C_{BV,d}(K)=\inf\left\{\left(\int_{\mathbb S^{n-1}}\|\nabla_uf\|_{L^1(\mathbb R^n)}^{-n}\,\frac{du}{n\omega_n}\right)^{-\frac{1}{n}}: f\in F(\mathbb R^n)\ \&\ f\geq 1_K\right\}.
\end{equation}
where $\nabla_u f=u\cdot\nabla f$ stands for the derivative of $f$ along the direction $u\in\mathbb S^{n-1}$ and $\|\cdot\|_{L^1(\R^n)}$ denotes the Lebesgue $1$-integral on $\mathbb R^n$.

\item[(ii)] If $\mathscr{B}(K)$ consists of all bounded open sets $O$ containing $K$, then
\begin{equation}
\label{eq22}
C_{BV,d}(K)=\inf_{O\in\mathscr B(K)}P_{BV,d}(O).
\end{equation}
 Moreover, if $K$ is convex then
\begin{equation}
\label{eq23}
C_{BV,d}(K)=P_{BV,d}(K).
\end{equation}
\end{itemize}
\end{theorem}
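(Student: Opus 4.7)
The argument breaks into two halves corresponding to (i) and (ii), both resting on approximation combined with monotonicity of the $L^{-n}$-mean over $\mathbb S^{n-1}$. A preliminary remark: for $f\in Lip_c(\mathbb R^n)$, Rademacher's theorem gives $Df=\nabla f\,dx$ and $d|Df|=|\nabla f|\,dx$, so $\int|u\cdot\sigma_f|\,d|Df|=\|\nabla_u f\|_{L^1(\mathbb R^n)}$; thus the right-hand side of \eqref{eq21} equals $\|f\|_{BV,d(\mathbb R^n)}$ restricted to $F(\mathbb R^n)$. For (i), the inclusions $C_c^\infty(\mathbb R^n)\subset C_c^1(\mathbb R^n)\subset Lip_c(\mathbb R^n)$ chain the three candidate infima, so it suffices to show $\inf_{C_c^\infty}\le C_{BV,d}(K)\le \inf_{Lip_c}$.

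The upper inequality $C_{BV,d}(K)\le \inf_{Lip_c}$ is a rescaling trick: given $f\in Lip_c(\mathbb R^n)$ with $f\ge 1_K$, the continuous function $g_\delta=(1-\delta)^{-1}f$ satisfies $g_\delta>1$ on $K$, hence $g_\delta\ge 1$ on the open set $\{g_\delta>1\}\supset K$, placing $g_\delta\in\mathscr A(K)$ with $\|g_\delta\|_{BV,d(\mathbb R^n)}\to\|f\|_{BV,d(\mathbb R^n)}$ as $\delta\to 0^+$. The reverse is truncate-then-mollify: given $f\in\mathscr A(K)$ with $f\ge 1$ on an open $U\supset K$, multiply by a smooth cutoff $\phi_R\equiv 1$ on $B_R$ supported in $B_{R+1}$ with $|\nabla\phi_R|\le 2$, then convolve with a standard mollifier $\rho_\epsilon$ to obtain $g_{R,\epsilon}\in C_c^\infty(\mathbb R^n)$; for $R$ large and $\epsilon$ small, $g_{R,\epsilon}\ge 1_K$ since $f\ge 1$ on $U$. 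The product rule and Jensen's inequality for convolution yield
$$\int_{\mathbb R^n}|u\cdot\nabla g_{R,\epsilon}|\,dx \le \int_{\mathbb R^n}|u\cdot\sigma_f|\,d|Df| + 2\int_{B_{R+1}\setminus B_R}|f|\,dx,$$
with error term uniform in $u$ and vanishing as $R\to\infty$ by $f\in L^1(\mathbb R^n)$; monotone convergence in the $L^{-n}$-mean then gives $\limsup_R\|g_{R,\epsilon}\|_{BV,d(\mathbb R^n)}\le\|f\|_{BV,d(\mathbb R^n)}$, finishing \eqref{eq21}.

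For (ii), the inequality $C_{BV,d}(K)\le\inf_O P_{BV,d}(O)$ is immediate because $1_O\in\mathscr A(K)$ for each $O\in\mathscr B(K)$. For the opposite direction, invoke (i) to pick $f\in C_c^\infty(\mathbb R^n)$ with $f\ge 1_K$ and $\|f\|_{BV,d(\mathbb R^n)}\le C_{BV,d}(K)+\eta$, and examine the superlevel sets $E_t=\{f>t\}$; for $t\in(0,1)$ these are bounded open sets containing $K$, and for a.e.\ $t$ they have smooth boundary by Sard's theorem, hence finite perimeter. The main tool is the affine coarea-type inequality
$$\|f\|_{BV,d(\mathbb R^n)}\ \ge\ \int_0^\infty P_{BV,d}(E_t)\,dt,$$
obtained by recognizing $\int|u\cdot\nabla f|\,dx=2h_Q(u)$ as the support function of the Minkowski integral $Q=\int_0^\infty\Pi E_t\,dt$ of projection bodies and applying the integral Minkowski inequality $V(Q^\circ)^{-1/n}\ge\int_0^\infty V(\Pi^\circ E_t)^{-1/n}\,dt$, as in \cite[Theorem 6.5]{Wang}. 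Since $\int_0^1 P_{BV,d}(E_t)\,dt\ge\inf_{t\in(0,1)}P_{BV,d}(E_t)$, this produces a bounded open $E_t\supset K$ with $P_{BV,d}(E_t)\le\|f\|_{BV,d(\mathbb R^n)}+\eta$, and $\eta\to 0$ delivers \eqref{eq22}.

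Finally, for convex $K$ the upper bound $C_{BV,d}(K)\le P_{BV,d}(K)$ comes from taking the open Minkowski neighborhood $K+\epsilon\,\mathrm{int}(\mathbb B^n)\in\mathscr B(K)$ and using Hausdorff-continuity of $P_{BV,d}$ on $\mathcal K^n$, inherited from continuity of $\Pi$, polarity, and volume. For the reverse, \eqref{eq22} combined with a line-crossing argument suffices: for any bounded open $O\supset K$ of finite perimeter, Cauchy's projection formula gives $h_{\Pi K}(v)=H^{n-1}(\pi_{v^\perp}K)$, while counting $\partial^\star O$-crossings of lines parallel to $v$ yields $h_{\Pi O}(v)\ge H^{n-1}(\pi_{v^\perp}O)\ge H^{n-1}(\pi_{v^\perp}K)$; thus $\Pi K\subset\Pi O$, $V(\Pi^\circ O)\le V(\Pi^\circ K)$, and $P_{BV,d}(O)\ge P_{BV,d}(K)$, giving \eqref{eq23}. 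The main obstacle in the whole program is the negative exponent in the $L^{-n}$-mean: since $t\mapsto t^{-n}$ is decreasing and singular at $0$, no naive weak-limit passage is available, and every approximation step must be installed as a \emph{uniform-in-$u$} pointwise inequality for the inner integral before averaging over $\mathbb S^{n-1}$.
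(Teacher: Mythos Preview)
Your proof is correct and follows essentially the same architecture as the paper's: mollification plus rescaling for \eqref{eq21}, the Minkowski/coarea inequality for \eqref{eq22}, and the Cauchy projection (line-crossing) comparison together with a shrinking family of convex neighborhoods for \eqref{eq23}. Your explicit truncation by $\phi_R$ before mollifying is a welcome refinement---the paper mollifies $f\in\mathscr A(K)$ directly without securing compact support---and your choice to run the coarea step through $C_c^\infty$ competitors (via part~(i)) rather than general BV functions is marginally cleaner, but neither difference changes the substance of the argument.
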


\begin{proof} (i) Due to a mollification (cf. \cite[5.2]{Pf}), it is enough to verify the result for ${F}(\R^n)=C_c^\infty(\mathbb R^n)$.

Since $C_c^\infty(\R^n)\subset BV(\mathbb R^n),$ we have
$$
C_{BV,d}(K)\leq \inf\left\{\left(\left(n\omega_n\right)^{-1}\int_{\mathbb S^{n-1}}\Big(\int_{\mathbb R^n}|u\cdot\sigma_f|\,d|Df|\Big)^{-n}\,du\right)^{-\frac{1}{n}}: f\in C_c^\infty(\R^n)\ \&\ f\geq 1_K\right\}=:C_{1,d}(K).
$$

To prove the reverse one of the last inequality, we use the standard approximation technique. According to Definition \ref{BVoriginaldefinition}, for any $\epsilon>0$ there exists an $f\in\mathscr A(K)$ with
$$
\left(\int_{\mathbb S^{n-1}}\Big(\int_{\mathbb R^n}|u\cdot\sigma_f|\,d|Df|\Big)^{-n}\,\frac{du}{n\omega_n}\right)^{-\frac{1}{n}}
<C_{BV,d}(K)+\epsilon,
$$
so there exists an approximation to the identity $\rho_{\epsilon}$ with $$
\left(\int_{\mathbb S^{n-1}}\|\nabla_u(f\ast\rho_{\epsilon})\|_1^{-n}\,\frac{du}{n\omega_n}\right)^{-\frac{1}{n}}\le
\left(\int_{\mathbb S^{n-1}}\Big(\int_{\mathbb R^n}|u\cdot\sigma_f|\,d|Df|\Big)^{-n}\,\frac{du}{n\omega_n}\right)^{-\frac{1}{n}}+\frac{\epsilon}{2}\le C_{BV,d}(K)+\frac{3}{2}\epsilon.
$$
Notice that for some small $t(\epsilon)>0$ we have $$\frac{f\ast\rho_{\epsilon}}{1-t(\epsilon)}\geq 1_K.
$$
Thus we have
$$
C_{1,d}(K)\le\left(\int_{\mathbb S^{n-1}}\Big\|\nabla_u\big(\frac{f\ast\rho_{\epsilon}}{1-t(\epsilon)}\big)\Big\|_{L^1(\R^n)}^{-n}\,\frac{du}{n\omega_n}\right)^{-\frac{1}{n}}\leq \big({1-t(\epsilon)}\big)^{-1}\left(C_{BV,d}(K)+\frac{3}{2}\epsilon\right).
$$
Upon letting $\epsilon \rightarrow 0,$ we get
$$C_{1,d}(K)\leq C_{BV,d}(K),$$
thereby reaching \eqref{eq21}.

(ii) On the one hand, given a function $f\in\mathscr{A}(K)$ and $t>0$, let
$$
O_t(f)=\{x\in\mathbb R^n: |f(x)|>t\}.
$$
Then, from Definition \ref{BVoriginaldefinition} and the Minkowski inequality and the fact that $O_t(f)$ is a set of finite perimeter for almost all $t\in\mathbb R$, it follows that there exists an $f\in\mathscr A(K)$ such that
\begin{eqnarray*}
C_{BV,d}(K)+\epsilon&\geq&\left(\int_{\mathbb S^{n-1}}(\int_{\R^n}|u\cdot\sigma_f(x)|d|Df|)^{-n}\,\frac{du}{n\omega_n}\right)^{-\frac{1}{n}}\\
&=&\left(\int_{\mathbb S^{n-1}}\Big(\int_0^\infty\int_{\partial O_t(f)}|u\cdot\sigma_f|\,dH^{n-1}dt\Big)^{-n}\,\frac{du}{n\omega_n}\right)^{-\frac{1}{n}}\\
&\ge&\int_0^\infty\left(\int_{\mathbb S^{n-1}}\Big(\int_{\partial^\star O_t(f)}|u\cdot\nu_{O_t(f)}|\,dH^{n-1}\Big)^{-n}\,\frac{du}{n\omega_n}\right)^{-\frac1n}\,dt\\
&\ge&\int_0^1\left(\int_{\mathbb S^{n-1}}\Big(\int_{\partial^\star O_t(f)}|u\cdot\nu_{O_t(f)}|\,dH^{n-1}\Big)^{-n}\,\frac{du}{n\omega_n}\right)^{-\frac1n}\,dt\\
&\geq& \inf_{O\in\mathscr{B}(K)}P_{BV,d}(O).
\end{eqnarray*}

On the other hand, assume $\inf_{O\in\mathscr{B}(K)}P_{BV,d}(O)<\infty.$ According to the Definition \ref{BVoriginaldefinition}, for any $L\in\mathscr{B}(K)$ one has
$$
C_{BV,d}(K)\leq \left(\int_{\mathbb S^{n-1}}\Big(\int_{\R^n}|u\cdot\nu_L(x)|\,d|D1_L|\Big)^{-n}\,\frac{du}{n\omega_n}\right)^{-\frac{1}{n}}=P_{BV,d}(L).
$$
This in turn implies
$$C_{BV,d}(K)\leq \inf_{O\in\mathscr{B}(K)}P_{BV,d}(O).$$
Putting the above two cases together, we find \eqref{eq22}.

Now, suppose $K$ is convex. Without loss of generality, we assume $K\in\mathcal K_0^n$. The just-verified formula \eqref{eq22} is utilized to get that for any $\epsilon>0$ there is an $O\in\mathscr{B}(K)$ enjoying
$$
P_{BV,d}(O)<C_{BV,d}(K)+\epsilon.
$$
Notice that
$$\int_{\partial^\star K}|u\cdot\nu_K(x)|\,dH^{n-1}(x)=\int_{K\mid u^\perp}H^0(\partial^\star K\cap(y+L_0^u))\,dH^{n-1}(y)\quad\forall\quad u\in \mathbb S^{n-1}.
$$
Therefore, we have
$$
P_{BV,d}(K)\leq P_{BV,d}(O),
$$
thereby finding
$$
P_{BV,d}(K)<C_{BV,d}(K)+\epsilon\quad\hbox{and\ so}\quad P_{BV,d}(K)\le C_{BV,d}(K).
$$
Meanwhile, it is clear that for an arbitrarily small $\epsilon>0$ one has $1_{(1+\epsilon)K}\in\mathscr{A}(K).$ Therefore,
\begin{eqnarray*}
C_{BV,d}(K) &\leq& \left(\int_{\mathbb S^{n-1}}(\int_{\R^n}|u\cdot\nu_{(1+\epsilon)K}(x)|d|D1_{(1+\epsilon)K}|)^{-n}\,\frac{du}{n\omega_n}\right)^{-\frac{1}{n}}\\
            &=&  \left(\int_{\mathbb S^{n-1}}(\int_{\partial^\star (1+\epsilon)E}|u\cdot\nu_{(1+\epsilon)K}(x)|dH^{n-1})^{-n}\,\frac{du}{n\omega_n}\right)^{-\frac{1}{n}}\\
            &=&P_{BV,d}((1+\epsilon) K)\\
            &=&(1+\epsilon)^{n-1}P_{BV,d}(K).
\end{eqnarray*}
By letting $\epsilon\rightarrow0$, we have
$$
C_{BV,d}(K)\le P_{BV,d}(K),
$$
whence arriving at \eqref{eq23} which is indeed the identity presented in \cite[Theorem 1]{X14r}.

\end{proof}

\subsection{Metric properties of $C_{BV,d}$}\label{s33} Below is a list of the metric properties for the BV-affine capacity with at least two distinctions from $C_{BV}$: First, (vii) indicates that $C_{BV,d}$, as a nonnegative set function on compact sets, is not Choquet capacity, in contrast to the basic fact that $C_{BV}$ is a Choquet capacity; see also \cite[p.457]{Fl} and \cite[Theorem 1.1]{Fr}; and second, the vanishing nature in (viii) is essentially different from the equivalence that $C_{BV}(E)=0$ if and only if $H^{n-1}(E)=0$; see also \cite[Theorem 4.3]{Fl}.

\begin{theorem}\label{t22} The following are valid:

\begin{itemize}
\item[(i)] If $K\subseteq\mathbb R^n$ is compact, then $C_{BV,d}(K)=C_{BV,d}(\partial K)$.

\item[(ii)] If $K\subseteq\mathbb R^n$ is compact and $\{x_0\}+rK=\{x_0+rx:\ x\in K\}\ \forall\ r\in(0,\infty)$, then $C_{BV,d}(\{x_0\}+rK)=r^{n-1}C_{BV,d}(K)$.

\item[(iii)]
For each compact $K\subset \mathbb R^n$, we have $C_{BV,d}(K)=C_{BV,d}(\Phi K+x)$ with $\Phi\in SL(n),$ and $x\in \mathbb R^n.$

\item[(iv)] If $K_1, K_2\subseteq\mathbb R^n$ are compact with $K_1\subseteq K_2\subset \R^n$, then $C_{BV,d}(K_1)\leq C_{BV,d}(K_2).$

\item[(v)] If $\left\{K_j\right\}_{j=1}^{\infty}$ is a sequence of compact subsets of $\mathbb R^n$ with $K_{j+1}\subseteq K_j\subset\R^n$, $j=1,2,3,\cdots$, then
 $C_{BV,d}\left(\bigcap_{j=1}^{\infty}K_j\right)=\lim_{j\rightarrow\infty}C_{BV,d}(K_j)$.

\item[(vi)] For each compact $K\subset \mathbb R^n$ and given $\epsilon>0$ there is an open set $O\supset K$ such that
for every compact $F\subset \mathbb R^n$ with $O\supset F\supset K$ one has $C_{BV,d}(F)\leq C_{BV,d}(K)+\epsilon.$

\item[(vii)] There are two compact sets $E$ and $F$ in $\mathbb R^2$ such that $C_{BV,d}(E\cup F)\ge C_{BV,d}(E)+C_{BV,d}(F)$.

\item[(viii)] $C_{BV,d}(\mathbb B^n)=2\omega_{n-1}$ and $C_{BV,d}(\mathbb B^n\cap \mathbb R^{n-1})=0$.
\end{itemize}
\end{theorem}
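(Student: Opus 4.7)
The plan is to dispatch (i)--(vi) via the open-set formula \eqref{eq22} and the $SL(n)$-invariance of the affine $BV$-norm from \cite{Wang}, then to compute (viii) by explicit thin-flat test sets, and finally to attack (vii)---the main obstacle---via a support-function/polarity bound applied to the cross configuration.

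Monotonicity (iv) is immediate from the containment $\mathscr A(K_2)\subseteq\mathscr A(K_1)$ when $K_1\subseteq K_2$. For (ii) and (iii), setting $g(x)=f(\Phi^{-1}(x-x_0))$ a direct chain-rule computation gives $\|u\cdot\nabla g\|_{L^1(\mathbb R^n)}=|\det\Phi|\,\|(\Phi^{-1}u)\cdot\nabla f\|_{L^1(\mathbb R^n)}$ for every $u\in\mathbb S^{n-1}$; combined with Theorem~\ref{t21}(i) this produces the factor $r^{n-1}$ in (ii) (take $\Phi=rI$), while (iii) follows from the $SL(n)$-invariance of $\|\cdot\|_{BV,d(\mathbb R^n)}$ proved in \cite{Wang}. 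For (vi), by \eqref{eq22} I pick an open bounded $O\supseteq K$ with $P_{BV,d}(O)<C_{BV,d}(K)+\epsilon$; any compact $F$ with $K\subseteq F\subseteq O$ then satisfies $C_{BV,d}(F)\le P_{BV,d}(O)$ by \eqref{eq22} applied to $F$. Part (v) combines (iv) and (vi) with the compactness observation that $\bigcap_j(K_j\setminus O)=\emptyset$ forces $K_j\subseteq O$ for all large $j$. For (i), given an open bounded $O\supseteq\partial K$, the set $\widetilde O=O\cup\mathrm{int}(K)$ is open, contains $K$, and since $\partial K\subseteq O$ pushes $\partial K$ into $\mathrm{int}(\widetilde O)$ one checks $\partial\widetilde O\subseteq\partial O$ with matching measure-theoretic normals; hence $P_{BV,d}(\widetilde O)\le P_{BV,d}(O)$ and \eqref{eq22} forces $C_{BV,d}(K)\le C_{BV,d}(\partial K)$, the reverse being (iv).

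For (viii), the ball is handled by \eqref{eq23} and the standard identity $\Pi\mathbb B^n=\omega_{n-1}\mathbb B^n$ (coming from $\int_{\mathbb S^{n-1}}|u\cdot v|\,du=2\omega_{n-1}|v|$), which yield $C_{BV,d}(\mathbb B^n)=P_{BV,d}(\mathbb B^n)=2\omega_{n-1}$. For the equatorial disk I apply \eqref{eq22} to the flat cylinders $O_\epsilon=\{x:\,|x'|<1+\epsilon,\;|x_n|<\epsilon\}$; a short computation on the top/bottom/side faces gives
\[
h_{\Pi O_\epsilon}(v)=\omega_{n-1}(1+\epsilon)^{n-1}|v_n|+2\epsilon(1+\epsilon)^{n-2}\omega_{n-2}|v'|,
\]
so $\Pi O_\epsilon$ is a right circular cylinder whose polar is a bipyramid of volume of order $\epsilon^{-(n-1)}$; consequently $P_{BV,d}(O_\epsilon)$ is of order $\epsilon^{(n-1)/n}\to 0$, whence $C_{BV,d}(\mathbb B^n\cap\mathbb R^{n-1})=0$.

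The main obstacle is (vii). I take the cross $E=[-1,1]\times\{0\}$ and $F=\{0\}\times[-1,1]$ in $\mathbb R^2$; by the flat-thickening argument above applied in dimension two (or by \eqref{eq22} applied to thin rectangles) one obtains $C_{BV,d}(E)=C_{BV,d}(F)=0$, so the task reduces to proving $C_{BV,d}(E\cup F)>0$. For any bounded open $O\supseteq E\cup F$ and any unit $v=(\cos\theta,\sin\theta)$, Cauchy's formula together with the inclusion $\pi_{v^\perp}(O)\supseteq\pi_{v^\perp}(E\cup F)$ (where $\pi_{v^\perp}$ denotes orthogonal projection onto $v^\perp$) yields
\[
h_{\Pi O}(v)=\tfrac12\int_{\partial^\star O}|\nu_O\cdot v|\,dH^1\;\ge\;|\pi_{v^\perp}(E\cup F)|=2\max(|\cos\theta|,|\sin\theta|),
\]
which is precisely the support function of the rhombus $R=\mathrm{conv}\{(\pm 2,0),(0,\pm 2)\}$. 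Support-function monotonicity gives $\Pi O\supseteq R$, hence $\Pi^\circ O\subseteq R^\circ=[-\tfrac12,\tfrac12]^2$ and $V(\Pi^\circ O)\le 1$. Therefore $P_{BV,d}(O)\ge(2^2\omega_2)^{1/2}=2\sqrt\pi$ uniformly in $O$, and \eqref{eq22} delivers $C_{BV,d}(E\cup F)\ge 2\sqrt\pi>0=C_{BV,d}(E)+C_{BV,d}(F)$. The delicate point is recognizing that each segment alone allows $\Pi O$ to degenerate to a segment (blowing up its polar volume via unboundedness), whereas the union $E\cup F$ spans two linearly independent directions and forces $\Pi O$ to contain a genuine two-dimensional rhombus whose polar has bounded volume---a phenomenon with no analogue for the isotropic capacity $C_{BV}$.
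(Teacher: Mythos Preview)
Your argument is correct throughout, and for (i)--(vi) it tracks the paper's proof essentially line by line (the paper even glosses over the same boundary-containment step in (i) that you do). The genuine divergence is in (vii) and (viii).

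For (viii) the paper simply cites \cite{X15} and \cite[Theorem~2.4(i)]{XZ}, whereas you supply a self-contained computation: the ball via \eqref{eq23} and $\Pi\mathbb B^n=\omega_{n-1}\mathbb B^n$, the equatorial disk via the thin-cylinder family $O_\epsilon$ and the explicit bipyramid polar. This is more informative and makes the paper independent of those references at this point.

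For (vii) the two constructions are quite different. The paper takes \emph{full-dimensional} rectangles $E=[-500,500]\times[-5,5]$ and $F=[-5,5]\times[-500,500]$; since rectangles have positive affine capacity, the proof must sandwich $C_{BV,d}(E)=C_{BV,d}(F)$ between $C_{BV,d}(50\,\mathbb B^2)$ and $C_{BV,d}(50\sqrt2\,\mathbb B^2)$ via $SL(2)$-invariance, and then bound $C_{BV,d}(E\cup F)$ from below by a careful line-counting estimate $H^0(\partial^\star L\cap(y+L_0^u))\ge 2$ together with the numerical coincidence $500/\sqrt2>2\cdot 50\sqrt2$. Your choice of the degenerate cross $E=[-1,1]\times\{0\}$, $F=\{0\}\times[-1,1]$ collapses the right-hand side to zero (each segment has $C_{BV,d}=0$ by the thin-rectangle argument, which is the $n=2$ instance of the disk computation in (viii)), so the entire burden shifts to the single inequality $h_{\Pi O}(v)\ge|\pi_{v^\perp}(E\cup F)|=2\max(|\cos\theta|,|\sin\theta|)$. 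That inequality is exactly the covering formula the paper invokes elsewhere, and the polar computation $R^\circ=[-\tfrac12,\tfrac12]^2$ then gives the clean lower bound $C_{BV,d}(E\cup F)\ge 2\sqrt\pi$. Your route is shorter, avoids the ad hoc numerical constants, and makes the mechanism transparent: a single segment lets $\Pi O$ degenerate (unbounded polar), while two independent segments force $\Pi O$ to contain a fixed rhombus. The paper's example, on the other hand, shows that the failure of subadditivity already occurs among sets of positive capacity, a slightly stronger qualitative statement.
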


\begin{proof} (i) Since $K$ is compact, $\partial K$ is a subset of $K$ and hence $C_{BV,d}(\partial K)\le C_{BV,d}(K)$ due to (iv). To get its reverse inclusion, suppose $L\in\mathscr{B}(\partial K)$. Then $L$ is an open set containing $\partial K$ and hence $\partial K\cup L$ is open and $\partial(K\cup L)$ is a subset of $\partial L$. This, along with \eqref{eq22}, implies $C_{BV,d}(K)\le C_{BV,d}(\partial K)$ and so the formula $C_{BV,d}(K)=C_{BV,d}(\partial K)$ follows.

(ii)-(iv) The results follow from Theorem \ref{t21}(i) and a simple calculation.

(v) It is obvious from the monotonicity of $C_{BV,d}$ that $$C_{BV,d}\left(\bigcap_{j=1}^{\infty}K_j\right)\leq \lim_{j\rightarrow\infty}C_{BV,d}(K_j).$$
So, we are left to show that $$C_{BV,d}\left(\bigcap_{j=1}^{\infty}K_j\right)\geq \lim_{j\rightarrow\infty}C_{BV,d}(K_j).$$
We know from the definition of $C_{BV,d}(\cdot)$ that, there exists an $f\in BV(\R^n),$ with
$\bigcap_{j=1}^{\infty}K_j$ being contained in the interior $\{x:f(x)\geq 1\}^\circ
$ of $\{x:f(x)\geq 1\}$ such that $$\left(\left(n\omega_n\right)^{-1}\int_{\mathbb S^{n-1}}(\int_{\R^n}|u\cdot\sigma_f(x)|d|Df|)^{-n}du\right)^{-\frac{1}{n}}\leq C_{BV,d}\left(\bigcap_{j=1}^{\infty}K_j\right)+\epsilon.$$

Since $K_j$ is compact, we must have $$K_j\subseteq\{x:f(x)\geq1\}^\circ$$
for $j$ big enough. Therefore, $$C_{BV,d}(K_j)\leq \left(\left(n\omega_n\right)^{-1}\int_{\mathbb S^{n-1}}(\int_{\R^n}|u\cdot\sigma_f(x)|d|Df|)^{-n}du\right)^{-\frac{1}{n}}.$$
Therefore, $$C_{BV,d}\left(\bigcap_{j=1}^{\infty}K_j\right)\geq \lim_{j\rightarrow\infty}C_{BV,d}(K_j).$$

(vi) According to Theorem \ref{t21}(ii), there is an open set $L\supseteq K$ such that
$$
P_{BV,d}(L)\leq C_{BV,d}(K)+\epsilon.
$$ Consequently,
$$
C_{BV,d}(F)\leq P_{BV,d}(L)\quad\forall\quad F\subset L.
$$
Therefore,
$$
C_{BV,d}(F)\leq C_{BV,d}(K)+\epsilon.
$$

(vii) Let
$$
\begin{cases}
E=\{(x,y)\in\mathbb R^2:-500\leq x\leq 500 \mbox{~and~} -5\leq y\leq 5\};\\ F=\{(x,y)\in\mathbb R^2:-5\leq x\leq 5 \mbox{~and~} -500\leq y\leq 500\}.
\end{cases}
$$
Note first that the affine invariance of $C_{BV,d}$ are used to yield
$$
C_{BV,d}(E)=C_{BV,d}(\Phi E)\ \ \mbox{~where~}\ \ \Phi=
\begin{pmatrix} \frac{1}{10} & 0 \\ 0 & 10 \end{pmatrix}.
$$
So we have according to the monotonicity of $C_{BV,d}(\cdot)$
$$
C_{BV,d}(50\sqrt{2}\mathbb B^n)\geq C_{BV,d}(\Phi E)\geq C_{BV,d}(50\mathbb B^n).
$$
For $C_{BV,d}(E\cup F),$ we recall that from Theorem \ref{t21}(ii), we get that there is an open set $L$ such that
$$
E\cup F\subset L\quad\hbox{and}\quad
P_{BV,d}(L)\leq C_{BV,d}(E\cup F)+\epsilon.
$$
But for $P_{BV,d}(L)$, we know
\begin{eqnarray*}
P_{BV,d}(L)&=&\left(\int_{\mathbb S^{1}}(\int_{\partial^{\star}L}|u\cdot\nu_L(x)|dH^{1}(x))^{-2}\,\frac{du}{\sigma_1}\right)^{-\frac{1}{2}}\\
&=&\left(\int_{\mathbb S^{1}}\Big(\int_{L|u^{\perp}}H^\circ(\partial^{\star}L\cap(y+L_0^u))\, dH^{1}(y)\Big)^{-2}\,\frac{du}{\sigma_1}\right)^{-\frac{1}{2}}
\end{eqnarray*}
Notice that
$$
H^1(L|u^{\perp})\geq H^1(E\cup F|u^{\perp})\geq 1000\ \ \&\ \ H^\circ(\partial^{\star}L\cap(y+L_0^u))\geq 2
$$
for $y\in L|u^{\perp}.$ Thus, upon setting
$$
G=\{(x,y)\in\mathbb R^2:|x|+|y|<500\}
$$
we have
$$
C_{BV,d}(G)\geq C_{BV,d}\left(\frac{500}{\sqrt{2}}B_2\right).
$$
So, we will have
\begin{eqnarray*}
C_{BV,d}(E\cup F)+\epsilon &\geq& P_{BV,d}(L)   \\&\geq& P_{BV,d}(G)\\
&\geq& (\frac{500}{\sqrt{2}})C_{BV,d}(\mathbb B^n)\\ &\geq& 2\times(50\sqrt{2})C_{BV,d}(\mathbb B^n)\\&\geq& C_{BV,d}(E)+C_{BV,d}(F).
\end{eqnarray*}

(viii) This follows from \cite{X15} and \cite[Theorem 2.4(i)]{XZ} under $p=1$.

\end{proof}

\section{Steiner's symmetrization for the affine BV-capacity}\label{s4}

\subsection{A known assertion}\label{s41} We first recall the following two facts: The first (i) is from \cite{LYZ00}; and the second (ii) is from \cite{Wang}.

\begin{lemma}\label{l31} The following are valid:

\begin{itemize}
\item[(i)] Suppose $K,L\subset\mathbb R^{n-1}\times\mathbb R$ are convex bodies. Then $S_{e_n}\left(K^{\circ}\right)\subseteq L^{\circ}$
if and only if
\begin{equation}
\label{eq31}
h_K(x,t)=1=h_K(x,-s)\ \ \mbox{~with~}  t\neq -s\Rightarrow h_L\big(x,\frac{1}{2}t+\frac{1}{2}s\big)\leq 1.
\end{equation}

\item[(ii)] If $\mathcal P$ is the family of sets of finite perimeter, and $P_{BV,d}:\mathcal P\rightarrow \mathbb R$ is the affine perimeter functional, then $P_{BV,d}(\cdot)$ is continuous in the strict convergence topology of $BV(\mathbb R^n).$
\end{itemize}
\end{lemma}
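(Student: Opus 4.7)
The two parts are of quite different character, so I would handle them separately.

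For part (i), the plan is to translate both the Steiner symmetral and the polar-body inclusion into statements about the support function $h_K$. Fix $x\in\mathbb R^{n-1}\times\{0\}$ and consider the chord $K^\circ\cap L_x^{e_n}$. Since $K^\circ=\{y:h_K(y)\le 1\}$ and $r\mapsto h_K(x,r)$ is convex on $\mathbb R$, this chord is a (possibly empty) segment $\{x+re_n:a(x)\le r\le b(x)\}$ whose endpoints satisfy $h_K(x,a(x))=h_K(x,b(x))=1$ whenever it is non-degenerate. By \eqref{steiner symmetral} the corresponding chord of $S_{e_n}(K^\circ)$ is $\{x+re_n:|r|\le(b(x)-a(x))/2\}$, so $S_{e_n}(K^\circ)\subseteq L^\circ$ is equivalent to $h_L(x,r)\le 1$ for every $|r|\le(b(x)-a(x))/2$. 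Convexity of $r\mapsto h_L(x,r)$ reduces this further to the two endpoint inequalities $h_L(x,\pm(b(x)-a(x))/2)\le 1$. Setting $t:=b(x)$ and $-s:=a(x)$ recovers the ``$+$'' endpoint, while swapping $t:=a(x)$ and $-s:=b(x)$ recovers the ``$-$'' one. Each instance is exactly \eqref{eq31}, and the hypothesis $t\ne -s$ corresponds to non-degeneracy $a(x)<b(x)$, yielding the equivalence.

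For part (ii), the idea is to pass through the projection-body representation $P_{BV,d}(E)=(2^n\omega_n)^{1/n}V(\Pi^\circ E)^{-1/n}$. If $1_{E_k}\to 1_E$ strictly in $BV(\mathbb R^n)$, then $D1_{E_k}\rightharpoonup D1_E$ weak-$*$ as vector measures together with $|D1_{E_k}|(\mathbb R^n)\to|D1_E|(\mathbb R^n)$, which is precisely the hypothesis of Reshetnyak's continuity theorem. Applied to the positively $1$-homogeneous continuous integrand $y\mapsto|v\cdot y|$ for each fixed $v\in\mathbb S^{n-1}$, it gives
\[
h_{\Pi E_k}(v)=\tfrac12\int_{\partial^\star E_k}|v\cdot\nu_{E_k}|\,dH^{n-1}\ \longrightarrow\ \tfrac12\int_{\partial^\star E}|v\cdot\nu_E|\,dH^{n-1}=h_{\Pi E}(v).
\]
Since each $h_{\Pi E_k}$ is $\tfrac12|D1_{E_k}|(\mathbb R^n)$-Lipschitz on $\mathbb R^n$ and this bound is uniform in $k$, pointwise convergence upgrades to uniform convergence on $\mathbb S^{n-1}$, i.e., $\Pi E_k\to\Pi E$ in the Hausdorff metric. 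Continuity of polarity and of volume on $\mathcal K^n_0$ then forces $V(\Pi^\circ E_k)\to V(\Pi^\circ E)$, hence $P_{BV,d}(E_k)\to P_{BV,d}(E)$.

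The main obstacle I expect is in (ii): one must verify that Reshetnyak's theorem applies to the vector measures $D1_{E_k}$, not merely to their total-variation masses, and one must also handle the degenerate case where $\Pi E$ has empty interior, i.e., $V(\Pi^\circ E)=\infty$ and $P_{BV,d}(E)=0$. In that situation $h_{\Pi E_k}$ tends to zero along some direction, so the uniform-Lipschitz control gives $V(\Pi^\circ E_k)\to\infty$, matching the convention. Part (i), by contrast, is essentially a dictionary translation; its only subtle point is the reduction to endpoint conditions, which is powered by convexity of $h_L(x,\cdot)$ in its last coordinate.
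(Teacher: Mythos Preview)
The paper does not prove this lemma at all: it is stated in \S\ref{s41} under the heading ``A known assertion'' and simply attributed to \cite{LYZ00} for (i) and to \cite{Wang} for (ii), with no argument given. So there is no ``paper's own proof'' to compare against; your proposal is a reconstruction of these cited results.

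Your reconstruction is essentially correct and follows the standard lines. For (i), reading $K^\circ$ as the sublevel set $\{h_K\le 1\}$ and using convexity of $r\mapsto h_K(x,r)$ to identify chord endpoints, then using convexity of $r\mapsto h_L(x,r)$ to reduce to endpoint checks, is exactly how \cite{LYZ00} handles this (there in the $L_p$ setting). The swap $(t,s)\leftrightarrow(-s,-t)$ to cover both endpoints is the right bookkeeping. For (ii), your route through Reshetnyak continuity to get $h_{\Pi E_k}\to h_{\Pi E}$ pointwise, then equi-Lipschitz to upgrade to uniform convergence on $\mathbb S^{n-1}$, then Hausdorff-metric continuity of $K\mapsto V(K^\circ)$, is the argument in \cite{Wang}; indeed the paper itself invokes Reshetnyak and ``Lemma~4.1 of \cite{Wang}'' for precisely this uniform upgrade in the proof of Theorem~\ref{t31}. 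Your caveat about the degenerate case $V(\Pi^\circ E)=\infty$ is well placed but not a genuine obstacle, and the Reshetnyak hypothesis is exactly strict convergence of the vector measures $D1_{E_k}$, so there is no hidden gap there.
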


\subsection{Decreasing under Steiner's symmetrization and rounding}\label{s42} The following two inequalities under the Steiner symmetrization are well-known for $P_{BV}$ and $C_{BV}$; see e.g. \cite{Sa}.

\begin{theorem}\label{t31} Let $E$ be a compact set in $\mathbb R^n$ and $u\in \mathbb S^{n-1}$. If $S_{u}(E)$ is the Steiner symmetrization of $E$ in the direction $u$, then
\begin{equation}
\label{eq32}
P_{BV,d}(S_{u}(E))\le P_{BV,d}(E)\quad\&\quad
C_{BV,d}(S_{u}(E))\le C_{BV,d}(E).
\end{equation}
\end{theorem}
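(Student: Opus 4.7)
Without loss of generality take $u=e_n$. The two inequalities in \eqref{eq32} will be obtained in sequence: the perimeter estimate first, and the capacity estimate as a consequence via Theorem \ref{t21}(ii).

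\emph{Stage 1 — perimeter.} By the very definition
$$P_{BV,d}(E)=(2^n\omega_n)^{1/n}V(\Pi^\circ E)^{-1/n},$$
and since Steiner symmetrization preserves Lebesgue measure, the inequality $P_{BV,d}(S_{e_n}E)\le P_{BV,d}(E)$ is equivalent to the inclusion
\begin{equation*}
S_{e_n}(\Pi^\circ E)\subseteq \Pi^\circ(S_{e_n}E).
\end{equation*}
I would prove this inclusion by invoking Lemma \ref{l31}(i) with $K=\Pi E$ and $L=\Pi(S_{e_n}E)$: it suffices to verify the support-function criterion \eqref{eq31}. To this end, assume $h_{\Pi E}(x',t)=1=h_{\Pi E}(x',-s)$ with $t\neq -s$, and one must show $h_{\Pi(S_{e_n}E)}(x',(t+s)/2)\le 1$. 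Using the integral representation
$$h_{\Pi E}(v)=\tfrac12\int_{\partial^{\star} E}|\nu_E(x)\cdot v|\,dH^{n-1}(x),$$
I would slice $\partial^{\star}E$ over $y\in\R^{n-1}\simeq e_n^\perp$ via the coarea formula, labelling the (generically finite) set of heights $\{\tau_i(y)\}$ at which the vertical line $L_y^{e_n}$ meets $\partial^{\star}E$ together with the corresponding outer normals. The symmetrized set $S_{e_n}E$ has on each such line exactly two boundary points $\pm\ell(y)/2$ with $\ell(y)=H^1(L_y^{e_n}\cap E)$, and the normals there are determined by $\nabla\ell$. The proof of the pointwise inequality on each slice is a Minkowski/triangle-inequality manipulation of the form
$$\sum_i|a_i+tb_i|+\sum_i|a_i-sb_i|\ge (t+s)\bigl|\textstyle\sum_i b_i^{\pm}\bigr|+\cdots,$$
paralleling the argument in \cite{Wang} for the affine BV Petty projection inequality. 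This slice-by-slice bound, combined with the normalization $h_{\Pi E}(x',\pm)=1$ coming from the hypothesis of \eqref{eq31}, yields $h_{\Pi(S_{e_n}E)}(x',(t+s)/2)\le 1$, which is the support-function criterion.

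\emph{Stage 2 — approximation and capacity.} For a general compact $E$ the slice analysis above is carried out on a smooth approximation; Lemma \ref{l31}(ii) (strict continuity of $P_{BV,d}$) then transfers the inequality $P_{BV,d}(S_{e_n}(\,\cdot\,))\le P_{BV,d}(\,\cdot\,)$ to the limit. Degenerate cases (where $\Pi E$ fails to be a full-dimensional body, e.g.\ $E$ contained in a hyperplane) force $V(\Pi^\circ E)=\infty$ and render the estimate trivial. With the perimeter inequality in hand, the capacity statement follows from the realization \eqref{eq22}: given $\epsilon>0$, choose a bounded open $O\supset E$ with $P_{BV,d}(O)\le C_{BV,d}(E)+\epsilon$. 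Then $S_{e_n}(O)$ (or its interior, which differs only on a set of measure zero and gives the same perimeter) is a bounded open neighbourhood of $S_{e_n}(E)$, so by Stage 1
$$C_{BV,d}(S_{e_n}E)\le P_{BV,d}(S_{e_n}O)\le P_{BV,d}(O)\le C_{BV,d}(E)+\epsilon,$$
and letting $\epsilon\to 0$ closes the proof.

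\emph{Main obstacle.} The technical heart is Stage 1: reducing the affine perimeter inequality to the support-function criterion \eqref{eq31} and then verifying \eqref{eq31} by a slice-wise triangle inequality on $\partial^{\star}E$. Checking that the limiting points $\pm\ell(y)/2$ carry the correct BV-normal information and that measurable selection of crossings is legitimate is where the care is needed, and this is precisely where the reference to \cite{Wang} and the strict continuity from Lemma \ref{l31}(ii) become essential. The remaining steps — volume preservation under $S_{e_n}$, passing to the polar, and handing off to the capacity via \eqref{eq22} — are formal once the projection-body inclusion is in hand.
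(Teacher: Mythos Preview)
Your proposal is correct and follows essentially the same route as the paper: reduce the perimeter inequality to the inclusion $S_{e_n}(\Pi^\circ E)\subseteq\Pi^\circ(S_{e_n}E)$, check this via the support-function criterion of Lemma~\ref{l31}(i) by a slice-wise triangle-inequality computation of $h_{\Pi E}$, extend to general compact $E$ by approximation and the strict continuity in Lemma~\ref{l31}(ii), and then deduce the capacity inequality from \eqref{eq22}. The only cosmetic difference is that the paper first treats compact sets with \emph{polyhedral} boundary (so each vertical slice meets $\partial E$ in finitely many points governed by affine functions, making the support-function estimate $h_{\Pi S_{e_n}E}(x,\tfrac{t+s}{2})\le\tfrac12 h_{\Pi E}(x,t)+\tfrac12 h_{\Pi E}(x,-s)$ a direct sum of absolute values) rather than smooth approximants, but the logic is identical.
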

\begin{proof}  According to \eqref{eq22}, it is enough to verify the first inequality of \eqref{eq32}. Without loss of generality, we assume $u=e_n.$
If $f:\mathbb R^n\to\mathbb R$ and $G\subset \mathbb R^{n-1}$, we denote the graph of $f$ over $G$ by
$$
\Gamma(f,G)=\{(z,t)\in\mathbb R^n:z\in G,t=f(z)\}.
$$

Note that if $P_{BV,d}(E)=\infty$, then the statement is trivial. So we focus on the case when $P_{BV,d}(E)$ is finite.

Let's first assume that $E$ is a compact set with polyhedral boundary and that the outer unit normal to $E$ is never orthogonal to $e_n.$

By the assumption, and by the implicit function theorem, there exists a partition of the set
$$
G=\{z\in\mathbb R^{n-1}:L^1(E\cap L_z^{e_n})>0\}
$$
into finitely many $(n-1)$-dimensional polyhedral sets $\{G_h\}_{h=1}^M$ in $\mathbb R^{n-1}$,
$$
G=\bigcup_{h=1}^{M}G_h,
$$
 and affine functions
$$
g_h^k,f_h^k:G_h\to\mathbb R\quad\forall\quad (h,k)\in [1,\leq M]\times[1,\leq N(h)],
$$
with
$$
\partial E=\bigcup_{h=1}^{M}\bigcup_{k=1}^{N(h)}\Gamma(f_h^k,G_h)\cup\Gamma(g_h^k,G_h),
$$
$$
E=\bigcup_{h=1}^{M}\left\{(z,t)\in G_h\times\mathbb R:t\in\bigcup_{k=1}^{N(h)}\big(g_h^k(z),f_h^k(z)\big)\right\};
$$
thus, if
$$
m(z)=L^1\big(E\cap L_z^{e_n}\big)\quad\forall\quad z\in\mathbb R^{n-1}\times\left\{0\right\},
$$
then
$$
m(z)=\sum_{k=1}^{N(h)}f_h^k(z)-g_h^k(z),~~~~~\forall z\in G_h,
$$
so that $m$ is affine on each $G_h.$ Moreover, $m$ is continuous and piece-wise affine on each $\mathbb R^{n-1}.$ Since, $S_{e_n}\left(E\right)=\{(z,t)\in G\times \mathbb R:|t|<\frac{m(z)}{2}\},$ $S_{e_n}\left(E\right)$ is a bounded open set with polyhedral boundary.

Suppose
$$
h_{\Pi E}(x,t)=1=h_{\Pi E}(x,-s),
$$
recall that at the point $y=(x,f_j(x)),$ the outer normal to $\Gamma(f_j,G_i)$ is
$$
\nu_E(y)=\frac{(-\nabla f_j(x),1)}{\sqrt{|\nabla f_j(x)|^2+1}},
$$
and the outer normal to $\Gamma\big(g_j,G_i\big)$ at the point $y=(x,g_j(x))$ is
$$
\nu_E(y)=\frac{(\nabla g_j(x),-1)}{\sqrt{|\nabla g_j(x)|^2+1}}.
$$
So we have
\begin{align*}
h_{\Pi E}(x,t)&=\frac{1}{2}\int_{\partial^\star E}|(x,t)\cdot\nu_E(y)|dH^{n-1}(y)\\
&=\frac{1}{2}\int_{\partial E}|(x,t)\cdot\nu_E(y)|dH^{n-1}(y)\\
&= \sum_{i=1}^M\int_{G_i} \left(\Big(\sum_{j=1}^{N(i)}\Big|\frac{t-x\cdot \nabla f_j}{\sqrt{1+|\nabla f_j|^2}}\Big|\Big){\sqrt{1+|\nabla f_j|^2}}+\Big(\sum_{j=1}^{N(i)}\Big|\frac{-t+x\cdot\nabla g_j}{\sqrt{1+|\nabla g_j|^2}}\Big|\Big)\sqrt{1+|\nabla g_j|^2}\right)dx\\
&=\sum_{i=1}^M\int_{G_i} \left(\sum_{j=1}^{N(i)}|{t-x\cdot \nabla f_j}|+\sum_{j=1}^{N(i)}|{-t+x\cdot\nabla g_j}|\right)\,dx.
\end{align*}
Similarly, we have
$$
h_{\Pi E}(x,-s)=\sum_{i=1}^M\int_{G_i} \left(\sum_{j=1}^{N(i)}|{s+x\cdot \nabla f_j}|+\sum_{j=1}^{N(i)}|{s+x\cdot\nabla g_j}|\right)dx.
$$
Utilizing the above calculation, we get
\begin{align*}
h_{\Pi S_{e_n}E}\left(x,\frac{s+t}{2}\right)&=\sum_{i=1}^M\int_{G_i}\left(\Big|{\frac{s+t}{2}-x\cdot \nabla \frac{\sum_{j=1}^{N(i)}(f_j-g_j)}{2}}\Big|+\Big|{-\frac{s+t}{2}-x\cdot \nabla \frac{\sum_{j=1}^{N(i)}(f_j-g_j)}{2}}\Big|\right)dx\\
&\leq \frac{1}{2}h_{\Pi E}(x,t)+\frac{1}{2}h_{\Pi E}(x,-s)=1.
\end{align*}
Therefore, according to \eqref{eq31} we have
$$
S_{e_n}\big(\Pi^{\circ}E\big)\subset \Pi^{\circ}S_{e_n}\big(E\big).
$$
Recalling
$$
P_{BV,d}(E)=\big({2^n\omega_n}\big)^{\frac{1}{n}}\big(V(\Pi^\circ E)\big)^{-\frac1n},
$$
we get
$$
P_{BV,d}\big(S_{e_n}(E)\big)\leq P_{BV,d}(E).
$$

If $E$ is an arbitrary compact set of finite perimeter, then we choose $E_j$ to be an approximation sequence to $E$ in the strict convergence topology, with $E_j$ being a sequence of compact set with polyhedral boundary and that the outer unit normal to $E_j$ is never orthogonal to $e_n.$

We notice that the sets $S_{e_n}(E_k)$ are equibounded in $BV(\mathbb R^n).$ Therefore there are some set function $v\in BV(\mathbb R^n)$ and a subsequence which we still denote by $S_{e_n}(E_{k})$ such that
$$
1_{S_{e_n}(E_{k})}\rightharpoonup v\quad \mbox{~weakly\ \ in~}\quad BV(\mathbb R^n).
$$
On the other hand, from the fact that the $n$-dimensional Lebesgue measure of $S_{e_n}(E_k)\bigtriangleup S_{e_n}(E)$ is not greater than the $n$-dimensional Lebesgue measure of $E_k\bigtriangleup E$ it follows that
$$
1_{S_{e_n}(E_k)}\rightarrow 1_{S_{e_n}(E)}\quad \mbox{~in~}\quad L^1(\mathbb R^n).
$$
Now let $\mu_i$ denote the Radon-measure which is associated with the weak partial derivative $v_i (i=1,\ldots,n).$ Then we have
$$
\int_{\mathbb R^n}\phi\frac{\partial 1_{S_{e_n}(E_k)}}{\partial x_i}dx \longrightarrow \int_{\mathbb R^n}\phi d\mu_i\quad\forall\quad \phi \in C_c^{\infty}(\mathbb R^n)
$$
while, we have
\begin{eqnarray*}
\int_{\mathbb R^n}\phi\frac{\partial 1_{S_{e_n}(E_k)}}{\partial x_i}dx=-\int_{\mathbb R^n}\partial_{x_i}\phi1_{S_{e_n}(E_k)}dx\longrightarrow -\int_{\mathbb R^n}\partial_{x_i}\phi1_{S_{e_n}(E)}dx
\end{eqnarray*}
thereby finding $v=1_{S_{e_n}(E)}.$ Consequently, the Reshetnyak continuity theorem (cf. \cite[p.269] {Setsoffiniteperimeterandgeometricvariational}) is utilized to derive
$$
\int_{\partial^\star S_{e_n}(E_k)}|u\cdot\nu_{S_{e_n}(E_k)}(x)|dH^{n-1}(x)\longrightarrow\int_{\partial^\star S_{e_n}(E)}|u\cdot\nu_{S_{e_n}(E)}(x)|dH^{n-1}(x)\quad \mbox{~pointwisely~for~}\quad u\in \mathbb S^{n-1}.
$$
According to Lemma 4.1 of \cite{Wang}, we have
$$
\int_{\partial^\star S_{e_n}(E_k)}|u\cdot\nu_{S_{e_n}(E_k)}(x)|dH^{n-1}(x)\longrightarrow\int_{\partial^\star S_{e_n}(E)}|u\cdot\nu_{S_{e_n}(E)}(x)|dH^{n-1}(x)\quad \mbox{uniformly},
$$
whence getting
$$
P_{BV,d}\big(S_{e_n}(E_k)\big)\longrightarrow P_{BV,d}\big(S_{e_n}(E)\big).$$ Utilizing Lemma \ref{l31}(ii), we obtain
$$
P_{BV,d}\big(S_{e_n}(E)\big)=\lim_{n\rightarrow\infty}P_{BV,d}\big(S_{e_n}(E_n)\big)\leq\lim_{n\rightarrow\infty}P_{BV,d}(E_n)=P_{BV,d}(E).
$$
 \end{proof}

The following principle corresponds to the well-known fact on $P_{BV}$ (cf. \cite{Gi}) and $C_{BV}$ (cf. \cite{Sa}).

\begin{theorem}\label{t32}
If $R(E)$ is the rounding of $E$, then
\begin{equation}
\label{eq33}
P_{BV,d}(R(E))\le P_{BV,d}(E)\quad\&\quad
C_{BV,d}(R(E))\le C_{BV,d}(E).
\end{equation}
\end{theorem}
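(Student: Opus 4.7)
The plan is to derive both inequalities in \eqref{eq33} from the Steiner-symmetrization result in Theorem \ref{t31} by invoking the classical fact that, for a compact set of finite perimeter, a suitably chosen sequence of iterated Steiner symmetrizations converges to the rounding, and then passing to the limit via Lemma \ref{l31}(ii).

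First I would treat the perimeter inequality. The case $P_{BV,d}(E)=\infty$ is vacuous, so assume $P_{BV,d}(E)<\infty$; then $P_{BV}(E)<\infty$ as well by \eqref{e12P}. A classical construction---see for instance \cite[Ch.~19]{Setsoffiniteperimeterandgeometricvariational}---supplies a sequence of unit vectors $\{u_k\}_{k=1}^{\infty}\subset\mathbb S^{n-1}$ such that the iterates
$$
E_k:=(S_{u_k}\circ S_{u_{k-1}}\circ\cdots\circ S_{u_1})(E)
$$
satisfy $1_{E_k}\to 1_{R(E)}$ in $L^1(\mathbb R^n)$ and $|D1_{E_k}|(\mathbb R^n)\to |D1_{R(E)}|(\mathbb R^n)$; in other words $\{1_{E_k}\}$ converges strictly to $1_{R(E)}$ in $BV(\mathbb R^n)$. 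Iterating Theorem \ref{t31} yields
$$
P_{BV,d}(E_k)\le P_{BV,d}(E_{k-1})\le\cdots\le P_{BV,d}(E)
$$
for every $k$, so that Lemma \ref{l31}(ii) delivers
$$
P_{BV,d}(R(E))=\lim_{k\to\infty}P_{BV,d}(E_k)\le P_{BV,d}(E).
$$

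For the capacity inequality I would invoke the open-set realization \eqref{eq22}. Given $\epsilon>0$, pick a bounded open $O\supset E$ with $P_{BV,d}(O)\le C_{BV,d}(E)+\epsilon$. Since $V(R(E))=V(E)\le V(O)=V(R(O))$ and both roundings are Euclidean balls centered at the origin, $R(E)\subseteq R(O)$; combining the monotonicity of $C_{BV,d}$ from Theorem \ref{t22}(iv) with the convex-set identity \eqref{eq23} gives
$$
C_{BV,d}(R(E))\le C_{BV,d}(R(O))=P_{BV,d}(R(O)).
$$
The perimeter inequality already established, applied to the bounded open set $O$, then yields $P_{BV,d}(R(O))\le P_{BV,d}(O)\le C_{BV,d}(E)+\epsilon$, and sending $\epsilon\to 0$ finishes the proof.

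The main obstacle I anticipate is the strict-BV convergence $|D1_{E_k}|(\mathbb R^n)\to |D1_{R(E)}|(\mathbb R^n)$. While $L^1$-convergence $1_{E_k}\to 1_{R(E)}$ and the monotone decay $|D1_{E_k}|(\mathbb R^n)\le|D1_{E_{k-1}}|(\mathbb R^n)$ are classical, upgrading the lower-semicontinuity estimate $\liminf|D1_{E_k}|(\mathbb R^n)\ge|D1_{R(E)}|(\mathbb R^n)$ to an equality requires selecting the directions $\{u_k\}$ carefully---typically via a diagonal/compactness argument combined with the uniqueness of the isoperimetric minimizer among sets of prescribed volume. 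Once this strict convergence is in hand, the continuity of $P_{BV,d}$ from Lemma \ref{l31}(ii) makes the remainder of the limit passage essentially automatic.
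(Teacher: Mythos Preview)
Your argument is correct but takes a genuinely different route from the paper's. The paper proves the perimeter inequality in one line by invoking the affine isoperimetric (Petty projection) inequality \cite[Theorem~7.2]{Wang}: since $P_{BV,d}(E)=(2^n\omega_n)^{1/n}V(\Pi^\circ E)^{-1/n}$ and the ball $R(E)$ realizes equality in $V(E)^{n-1}V(\Pi^\circ E)\le(\omega_n/\omega_{n-1})^n$, the comparison $P_{BV,d}(R(E))\le P_{BV,d}(E)$ is literally a restatement of \eqref{e05}. Your approach instead \emph{derives} this inequality from Theorem~\ref{t31} by iterating Steiner symmetrizations down to the ball and using the strict-topology continuity in Lemma~\ref{l31}(ii). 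This is longer but has independent interest: it shows that Theorem~\ref{t31} together with the classical isoperimetric inequality (needed to secure the strict BV-convergence you flag) already implies the affine isoperimetric inequality for sets of finite perimeter, without importing \cite[Theorem~7.2]{Wang}. The obstacle you identify is the right one, and it is resolvable: since $|D1_{E_k}|(\mathbb R^n)$ is nonincreasing and bounded below by $|D1_{R(E)}|(\mathbb R^n)$ via the classical isoperimetric inequality, a standard compactness-plus-uniqueness argument (or a direct appeal to the De~Giorgi construction) lets one choose the directions so that the limit of perimeters actually equals $|D1_{R(E)}|(\mathbb R^n)$.

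For the capacity inequality both the paper and you use \eqref{eq22}; your version spells out the monotonicity step $R(E)\subseteq R(O)$ and the convex identity \eqref{eq23}, while the paper just writes ``and by \eqref{eq22}''. One minor point: you apply the perimeter inequality to the bounded \emph{open} set $O$, whereas Theorem~\ref{t31} is stated for compact sets; this is harmless since $P_{BV,d}$ depends only on the $L^1$-equivalence class, but it is worth noting in a final write-up.
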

\begin{proof}
Since $R(E)$ is the rounding of $E$, $R(E)$ is a ball with its volume being the same as $V(E)$. Now, an application of \cite[Theorem 7.2]{Wang} gives
\begin{align*}
\big(V(E)\big)^{n-1}&\int_{\mathbb S^{n-1}}\left(2^{-1}\int_{\partial^\star E}|u\cdot\nu_E|\,dH^{n-1}\right)^{-n}\,\frac{du}{n}\\
&=\big(V(E)\big)^{n-1}V(\Pi^\circ E)\\
&\le (\omega_n\omega^{-1}_{n-1})^n\\
&=\big(V(R(E))\big)^{n-1}V(\Pi^\circ R(E))\\
&=\big(V(R(E))\big)^{n-1}\int_{\mathbb S^{n-1}}\left(2^{-1}\int_{\partial^\star R(E)}|u\cdot\nu_{R(E)}|\,dH^{n-1}\right)^{-n}\,\frac{du}{n}.
\end{align*}
This last inequality implies
$$
\int_{\mathbb S^{n-1}}\left(2^{-1}\int_{\partial^\star E}|u\cdot\nu_E|\,dH^{n-1}\right)^{-n}\,\frac{du}{n}\le\int_{\mathbb S^{n-1}}\left(2^{-1}\int_{\partial^\star R(E)}|u\cdot\nu_{R(E)}|\,dH^{n-1}\right)^{-n}\,\frac{du}{n}
$$
whence deriving
$$
P_{BV,d}(R(E))\le P_{BV,d}(E),
$$
and by \eqref{eq22}
$$
C_{BV,d}(R(E))\le C_{BV,d}(E).
$$
\end{proof}
\section{Traces decided by the affine BV-capacity}\label{s5}

\subsection{An affine trace inequality}\label{s51} By an affine trace inequality we mean an inequality of the form
\begin{equation}
\label{e51}
\sup_{0\not\equiv f\in BV(\R^n)}\frac{\|f\|_{L^q_\mu(\R^n)}}{\|f\|_{BV,d(\R^n)}}<\infty,
\end{equation}
where $L^q_\mu(\R^n)$ is the Lebesgue $q$-space with respect to a given Radon measure on $\R^n$. As shown in the coming-up next assertion which may be regarded as an affine counterpart of \cite[Theorem 5.12.4]{Zi} or \cite[Theorem 4.7]{MZ}, the validity of \eqref{e51} is totally determined by restricting $\mu$ on a lower dimensional submanifold of $\R^n$.
\begin{theorem}
\label{t41} Given $\frac{n}{n-1}\ge q\ge 1$ and a nonnegative Radon measure $\mu$ on $\mathbb R^n$. The following three statements are equivalent.

\begin{itemize}

\item[(i)] There is a constant $\kappa_1>0$ such that $\Big(\int_{\R^n}|f|^q\,d\mu\Big)^\frac1q\le \kappa_1\|f\|_{BV,d(\R^n)}\ \forall\ f\in BV(\R^n)$.

\item[(ii)] There is a constant $\kappa_2>0$ such that $\big(\mu(K)\big)^\frac1q\le \kappa_2 C_{BV,d}(K)\ \forall\ \hbox{compact}\ K\subseteq\mathbb R^n$.

\item[(iii)] There is a constant $\kappa_3>0$ such that $\big(\mu(O)\big)^\frac1q\le \kappa_3 P_{BV,d}(O)\ \forall\ \hbox{bounded\ open}\ O\subseteq\mathbb R^n$.

\end{itemize}
\end{theorem}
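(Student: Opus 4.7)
The plan is to prove the cycle (i) $\Rightarrow$ (ii) $\Rightarrow$ (iii) $\Rightarrow$ (i), adapting to the affine setting the capacitary strategy of Maz'ya familiar from the classical BV-trace theory (cf. \cite[Theorem 5.12.4]{Zi}, \cite[Theorem 4.7]{MZ}).

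For (i) $\Rightarrow$ (ii), any $f\in\mathscr A(K)$ satisfies $f\ge 1$ on a neighbourhood of $K$, so
\[
(\mu(K))^{1/q}\le\|f\|_{L^q_\mu(\R^n)}\le\kappa_1\|f\|_{BV,d(\R^n)},
\]
and taking the infimum over $\mathscr A(K)$ gives (ii) with $\kappa_2=\kappa_1$. For (ii) $\Rightarrow$ (iii), given a bounded open $O$ I would exhaust it by compact sets $K_j\uparrow O$; since $O\in\mathscr B(K_j)$, Theorem \ref{t21}(ii) yields $C_{BV,d}(K_j)\le P_{BV,d}(O)$, and passing $j\to\infty$ via inner regularity of the Radon measure $\mu$ delivers (iii) with $\kappa_3=\kappa_2$.

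The core implication is (iii) $\Rightarrow$ (i). First I would reduce to $f\in C_c^\infty(\R^n)$ via Theorem \ref{t21}(i), the continuity of $\|\cdot\|_{BV,d(\R^n)}$ under strict convergence, and Fatou's lemma applied to a pointwise a.e.\ convergent subsequence in $L^q_\mu$. For such $f$, each super-level set $E_t:=\{|f|>t\}$, $t>0$, is bounded and open, so (iii) gives $\mu(E_t)^{1/q}\le\kappa_3 P_{BV,d}(E_t)$. Setting $\psi(t):=\mu(E_t)^{1/q}$, which is non-increasing in $t$, Cavalieri's principle yields $\|f\|_{L^q_\mu(\R^n)}^q=q\int_0^\infty t^{q-1}\psi(t)^q\,dt$. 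The crucial ingredient is the Maz'ya-type lemma
\[
q\int_0^\infty t^{q-1}\psi(t)^q\,dt\le\Big(\int_0^\infty\psi(t)\,dt\Big)^q\quad\forall\ q\ge 1,
\]
valid for any non-increasing $\psi\ge 0$; I would derive it by combining the monotonicity bound $t\psi(t)\le F(t):=\int_0^t\psi(s)\,ds$ with the identity $qF(t)^{q-1}\psi(t)=(F(t)^q)'$ and integrating. Chaining this with the coarea-style lower bound
\[
\int_0^\infty P_{BV,d}(E_t)\,dt\le\|f\|_{BV,d(\R^n)},
\]
already implicit in the proof of \eqref{eq22} via Fubini, the Minkowski integral inequality, and the BV-coarea formula, one arrives at
\[
\|f\|_{L^q_\mu(\R^n)}^q\le\Big(\int_0^\infty\psi(t)\,dt\Big)^q\le\kappa_3^q\Big(\int_0^\infty P_{BV,d}(E_t)\,dt\Big)^q\le\kappa_3^q\|f\|_{BV,d(\R^n)}^q,
\]
which gives (i) with $\kappa_1=\kappa_3$.

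The main obstacle I expect is the careful bookkeeping in the strict-convergence reduction, together with the measurability of $t\mapsto P_{BV,d}(E_t)$ that underpins the coarea-style bound used above; once these technicalities are secured, the Maz'ya-type inequality together with the upper threshold $q\le n/(n-1)$ (which ensures compatibility with the affine BV-Sobolev inequality \eqref{e04}) makes the chain of estimates essentially mechanical.
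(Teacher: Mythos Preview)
Your proposal is correct and follows essentially the same route as the paper: the cycle (i)$\Rightarrow$(ii)$\Rightarrow$(iii)$\Rightarrow$(i), with the first two steps coming from Definition~\ref{BVoriginaldefinition} and Theorem~\ref{t21}(ii) plus regularity of $\mu$, and the third via smooth approximation, the Maz'ya/Adams monotonicity inequality for $t\mapsto\mu(E_t)^{1/q}$, and the coarea-type bound $\int_0^\infty P_{BV,d}(E_t)\,dt\le\|f\|_{BV,d(\R^n)}$ from the proof of \eqref{eq22}. Two small remarks: the reduction to $C_c^\infty$ is by strict-convergence density in $BV$ (not Theorem~\ref{t21}(i), which concerns the capacity), and the hypothesis $q\le n/(n-1)$ plays no role in the equivalence itself---neither the paper's argument nor yours invokes it.
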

\begin{proof} Three implications are treated below.

(i)$\Rightarrow$(ii) This follows from Definition \ref{BVoriginaldefinition}.

(ii)$\Rightarrow$(iii) This follows from Theorem \ref{t21}(ii) and the outer regularity of $\mu$.

(iii)$\Rightarrow$(i) Suppose (iii) is true. Because any $BV(\R^n)$-function can be approximated by $C^\infty_c(\R^n)\cap BV(\R^n)$-functions in the strict convergence topology, in what follows we only consider $f\in C^\infty_c(\R^n)\cap BV(\R^n)$. Such a function $f$ ensures that
$$
t\mapsto\mu\big(\{x\in\R^n: |f(x)|>t\}\big)=:\mu_t(f)
$$
is a decreasing function on $(0,\infty)$. Thus, we have the following inequality (cf. Adams' inequality in \cite[Lemma 1.86]{MZ1}):
$$
t^q\mu_t(f)\le\left(\int_0^t (s^q\mu_s(f))^\frac1q\,\frac{ds}{s}\right)^q\quad\forall\quad t\in (0,\infty).
$$
Using the layer-cake formula and the argument for Theorem \ref{t21}(ii) we get
\begin{align*}
\int_{\R^n}|f|^q\,d\mu&=q\int_0^\infty\big(\mu_t(f)\big)^\frac1q\big((\mu_t(f))^\frac1q t\big)^{q-1}\,dt\\
&\le q\left(\int_0^\infty \big(\mu_t(f)\big)^\frac1q\Big(\int_0^t\big(s^q\mu_s(f)\big)^\frac1q\,\frac{ds}{s}\Big)^{q-1}\,dt\right)\\
&\le q\left(\int_0^\infty\big(\mu_s(f)\big)^\frac1q\,ds\right)^q\\
&\le q\kappa_3^q\left(\int_0^\infty  P_{BV,d}\big(\{x\in\R^n: |f(x)|> t\}\big)\,dt\right)^q\\
&\le q\kappa_3^q\|f\|^q_{BV,d(\R^n)},
\end{align*}
whence reaching (i).
\end{proof}

Here, it is appropriate to point out that only (iii)$\Rightarrow$(i) provides no a sharp constant unless $q=1$. But, an improved argument for this implication under the case $\mu=H^n=V$ and $q=n/(n-1)$ can be given via \cite[Theorem 6.4]{Wang}, thereby verifying \eqref{e05}$\Rightarrow$\eqref{e04} and consequently \eqref{e05} $\Leftrightarrow$\eqref{e07} for any compact set.

\subsection{The affine $q$-Cheeger constant}\label{s52} A further look at the case $\mu=V=H^n$ of Theorem \ref{t41} reveals that (ii) or (iii) in Theorem \ref{t41} is valid only for $q=n/(n-1)$ which induces the affine isoperimetry/Sobolev constant:

\begin{equation}\label{eq41}
\inf\left\{\frac{P_{BV,d}(O)}{\big(V(O)\big)^\frac{n-1}{n}}: \hbox{bounded\ open}\ {O}\ \hbox{with}\ C^1\ \hbox{boundary}\ \partial{O}\right\}=\frac{2\omega_{n-1}}{\omega_n^\frac{n-1}{n}},
\end{equation}
where the infimum in \eqref{eq41} is attainable by any origin-symmetric ellipsoid in $\R^n$ according to the equality case of \cite[Theorem 7.2]{Wang}.

Due to
$$
V(r\mathbb B^n)=\omega_n r^n\quad\&\quad P_{BV,d}(r\mathbb B^n)=2\omega_{n-1}r^{n-1},
$$
we are suggested to consider the so-called affine $[1,\frac{n}{n-1})\ni q$-Cheeger constant of an open set $O\subseteq\R^n$ with $V(O)<\infty$:
\begin{equation}
\label{eq42}
\mathsf{h}_{q,d}(O):=\inf_{D\subset O}\frac{P_{BV,d}(D)}{\big(V(D)\big)^\frac1q}\le\left(\frac{2\omega_{n-1}}{n\omega_n}\right)\inf_{D\subset O}\frac{P_{BV}(D)}{\big(V(D)\big)^\frac1q}=:\left(\frac{2\omega_{n-1}}{n\omega_n}\right)\mathsf{h}_q(O),
\end{equation}
where $\mathsf{h}_q(O)$ is the so-called $q$-Cheeger constant of $O$; see also \cite{Pf} and \cite{Cheeger} for the root of an original Cheeger constant. If $E\subseteq O$ solves
$$
 \mathsf{h}_{q,d}(O)=\frac{P_{BV,d}(E)}{\big(V(E)\big)^\frac1q},
$$
then $E$ is called an affine $q$-Cheeger set. Although there is always a $q$-Cheeger set as proved in \cite[Theorem 2.2]{Pf}, we can only obtain the following result on an affine $q$-Cheeger set.

\begin{proposition}
\label{t42} Let $1\le q<n/(n-1)$ and $O$ be an open subset of $\R^n$ with $V(O)<\infty$. If $\mathsf{h}_{q,d}(O)$ is attainable by a set $E\subseteq O$, then $\partial E\cap\partial O\not=\emptyset$.
\end{proposition}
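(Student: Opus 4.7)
The plan is to argue by contradiction. Suppose $\partial E\cap\partial O=\emptyset$. Since $E\subseteq O$ with $O$ open, no point of $\partial E$ can lie in $\mathbb R^n\setminus\overline{O}$, for any neighbourhood of such a point would meet $E\subseteq O$. Together with the hypothesis this forces $\partial E\subseteq O$, i.e.\ $\overline{E}\subseteq O$. Under the auxiliary assumption that $E$ is bounded (to be addressed below), $\overline{E}$ is then a compact subset of the open set $O$, so $d:=\hbox{dist}(\overline{E},\partial O)>0$.

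Next I would construct a strictly better competitor via a small homothety. Fix any $x_0\in E$ and for $\lambda>1$ set $E_\lambda:=x_0+\lambda(E-x_0)$. Each $y=x_0+\lambda(x-x_0)\in E_\lambda$ satisfies $|y-x|=(\lambda-1)|x-x_0|\le(\lambda-1)\,\hbox{diam}(E)$, so once $(\lambda-1)\hbox{diam}(E)<d$ the whole $E_\lambda$ still lies in $O$. By the usual change of variable, combined with the way the reduced boundary, the outer unit normal field, and the spherical integral in the definition of $P_{BV,d}$ all transform under dilation,
$$
V(E_\lambda)=\lambda^{n}V(E)\quad\&\quad P_{BV,d}(E_\lambda)=\lambda^{n-1}P_{BV,d}(E),
$$
consistent with the capacity scaling recorded in Theorem~\ref{t22}(ii) via \eqref{eq23}. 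Consequently,
$$
\frac{P_{BV,d}(E_\lambda)}{\bigl(V(E_\lambda)\bigr)^{1/q}}=\lambda^{\,n-1-n/q}\,\frac{P_{BV,d}(E)}{\bigl(V(E)\bigr)^{1/q}}=\lambda^{\,n-1-n/q}\,\mathsf{h}_{q,d}(O).
$$
Because $1\le q<n/(n-1)$, the exponent $n-1-n/q$ is strictly negative, so $\lambda^{\,n-1-n/q}<1$ for every $\lambda>1$. Thus $E_\lambda\subseteq O$ is a competitor whose affine Cheeger ratio is strictly less than $\mathsf{h}_{q,d}(O)$, contradicting the minimality of $E$ and forcing $\partial E\cap\partial O\neq\emptyset$.

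The main obstacle, and really the only genuinely delicate step, is the boundedness assumption on $E$: a set of finite volume can be unbounded, in which case the homothety $E_\lambda$ need not lie in $O$ even though $\overline{E}\subseteq O$. To handle this I would truncate: replace $E$ by $E_R:=E\cap B_R(x_0)$ for a judiciously chosen $R$ (selected via a Sard-type slicing argument so that $H^{n-1}(\partial B_R\cap\partial^\star E)$ is negligible), verify via a continuity lemma for the affine perimeter analogous to \cite[Lemma~4.1]{Wang} that the affine Cheeger ratio of $E_R$ tends to $\mathsf{h}_{q,d}(O)$ as $R\to\infty$, and then apply the homothety argument to the bounded truncation $E_R$ in place of $E$. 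Once this reduction is in place, the remainder of the proof is exactly the scaling computation above.
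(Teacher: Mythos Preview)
Your core argument---dilate $E$ and use the scaling $P_{BV,d}(\lambda E)=\lambda^{n-1}P_{BV,d}(E)$, $V(\lambda E)=\lambda^n V(E)$ together with $n-1-n/q<0$---is exactly the paper's approach. The paper additionally includes a short paragraph ruling out $V(E)=0$ (via Theorem~\ref{t32} applied to a minimizing sequence), which you omit; you should at least remark that ``attainable'' forces the ratio to be well-defined, hence $V(E)>0$.

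Your concern about boundedness is legitimate, and in fact the paper's proof is silent on it: it simply writes ``assume $E\Subset O$'' and picks $r>1$ with $rE\subseteq O$, both of which presuppose that $\overline E$ is compact. So you have correctly identified a point the paper glosses over. However, your proposed truncation fix does not close the gap as sketched. The difficulty is quantitative: for $E_R=E\cap B_R(x_0)$ the admissible dilation satisfies $(\lambda-1)\,\mathrm{diam}(E_R)<\mathrm{dist}(\overline{E_R},\partial O)$, and when $\overline E\subseteq O$ with $\mathrm{dist}(\overline E,\partial O)=0$ (which can happen---take $O=\{|y|<e^{-|x|}\}$ and $E=\{|y|<e^{-|x|}/2\}$ in $\mathbb R^2$) the right-hand side may decay much faster than $1/R$. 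Thus the improvement factor $\lambda^{\,n-1-n/q}$ tends to $1$ at a rate you do not control, while you only know that the ratio of $E_R$ approaches $\mathsf h_{q,d}(O)$ from above with no rate. Merely knowing $P_{BV,d}(E_R)\to P_{BV,d}(E)$ is not enough to conclude that the dilated $E_R$ beats the infimum. If you want to repair this, you would need either an a~priori reason why a minimizer must be bounded, or a more delicate competitor construction that does not rely on a global homothety.
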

\begin{proof} Without loss of generality, we may assume $\mathsf{h}_{q,d}(O)<\infty$ - otherwise any subset of $O$ is an affine $q$-Cheeger set and consequently there is nothing to argue. Since $E$ is an affine $q$-Cheeger set, by definition there is a sequence of subsets $D_j$ of $O$ such that
$$
\lim_{j\to\infty}\frac{P_{BV,d}(D_j)}{\big(V(D_j)\big)^\frac1q}=\mathsf{h}_{q,d}(O)=\frac{P_{BV,d}(E)}{\big(V(E)\big)^\frac1q}.
$$
Consequently, $V(E)>0$. In fact, if $V(E)=0$, then $P_{BV,d}(E)=0$ and so $V(D_j)\to 0$ due to $\mathsf{h}_{q,d}(O)<\infty$, and hence via \eqref{eq33} in Theorem \ref{t32} for $P_{BV,d}(D_j)$, i.e., via finding a ball $r_{D_j}\mathbb B^n$ with radius $r_{D_j}$ such that
$$
V(D_j)=V(r_{D_j}\mathbb B^n)\quad\&\quad P_{BV,d}(D_j)\ge P_{BV,d}(r_{D_j}\mathbb B^n)
$$
we use the hypothesis $q\in [1,n/(n-1))$ to obtain
$$
\infty>\mathsf{h}_{q,d}(O)\leftarrow\frac{P_{BV,d}(D_j)}{\big(V(D_j)\big)^\frac1q}\ge \frac{P_{BV,d}(r_{D_j}\mathbb B^n)}{\big(V(r_{D_j}\mathbb B^n)\big)^\frac1q}=\left(\frac{2\omega_{n-1}}{\omega_n}\right) r_j^{n(\frac{n-1}{n}-\frac{1}{q})}\to\infty\quad \hbox{as}\quad r_j\to 0,
$$
whence producing a contradiction. Thus, $V(E)>0$.

Next, in order to see $\partial E\cap\partial O\not=\emptyset$, let us assume $E\Subset O$. Then there would be an $r>1$ such that
$$
r E:=\{rx:\ x\in E\}\subseteq O,
$$
and hence
$$
\mathsf{h}_{q,d}(O)\le\frac{P_{BV,d}(r E)}{\big(V(r E)\big)^\frac1q}=\frac{r^{n-1}P_{BV,d}(E)}{r^{\frac{n}{q}} \big(V(E)\big)^{\frac1q}}<\frac{P_{BV,d}(E)}{\big(V(E)\big)^{\frac1q}}=\mathsf{h}_{q,d}(O)
$$
contradicting the assumption that $E$ is an affine $q$-Cheeger set.
\end{proof}

The next assertion connects the affine Cheeger constant and the so-called affine BV-Sobolev constant (or the generalized affine eigenvalue); see e.g. \cite{XZ, BBF, KF}.

\begin{theorem}
\label{t43} Given an open set $O\subseteq\mathbb R^n$ with $V(O)<\infty$, $p\in [1,\frac{q}{q-1})$ and $q\in [1,\frac{n}{n-1})$, let $W^{1,p}_0(O)$ be the completion of $C^\infty$-functions $f$ with compact support in $O$ (i.e. $f\in C^\infty_c(O)$) under the $p$-Sobolev norm $\|f\|_{L^p(O)}+\big\||\nabla f|\big\|_{L^p(O)}$. For $f\in W^{1,p}_0(O)$ set
$$
\|f\|_{\dot{W}^{1,p}_d(O)}=\left(\int_{\mathbb S^{n-1}}\big\||\nabla_u f|\big\|_{L^p(O)}^{-{n}}\,\frac{du}{n\omega_n}\right)^{-\frac{1}{n}}.
 $$
Then
\begin{equation}\label{eq43}
\mathsf{h}_{q,d}(O)\le {q^\frac{1-q}{q}\left(\frac{pq}{p-(p-1)q}\right)}\inf_{f\in W^{1,p}_0(O)\setminus\{0\}}\frac{\|f\|_{\dot{W}_d^{1,p}(O)}}{\|f\|_{L^\frac{pq}{p-(p-1)q}(O)}}=:\Lambda_{p,q,d}(O).
\end{equation}
Especially
\begin{equation}
\label{eq44}
\Lambda_{1,1,d}(O)=\mathsf{h}_{1,d}(O).
\end{equation}
\end{theorem}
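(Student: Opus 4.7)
The strategy is to sandwich $\||f|^s\|_{BV,d(\R^n)}$, for test functions $f\in C_c^\infty(O)$, between a H\"older-type upper bound and a coarea--Cheeger lower bound, with the exponent
\[
s:=\frac{p}{p-(p-1)q},\qquad\alpha:=sq=\frac{pq}{p-(p-1)q}.
\]
The hypothesis $p\in[1,q/(q-1))$ guarantees $p-(p-1)q>0$ and $s\ge 1$, and arranges the H\"older-conjugate identity $(s-1)p/(p-1)=\alpha$. Since $C_c^\infty(O)$ is dense in $W_0^{1,p}(O)$ for the norm controlling $\Lambda_{p,q,d}$, it suffices to test the Sobolev infimum against such smooth compactly supported $f$.

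\textbf{Upper bound.} For smooth $f$ the chain rule gives $|\nabla_u(|f|^s)|=s|f|^{s-1}|\nabla_u f|$; H\"older in $\R^n$ with the conjugate pair $(p,p/(p-1))$ then yields, for each $u\in\mathbb S^{n-1}$,
\[
\bigl\||\nabla_u(|f|^s)|\bigr\|_{L^1(\R^n)}\le s\,\|f\|_{L^\alpha(O)}^{s-1}\,\|\nabla_u f\|_{L^p(O)}.
\]
Since the map $A(\cdot)\mapsto\bigl(\int_{\mathbb S^{n-1}}A(u)^{-n}\,du/(n\omega_n)\bigr)^{-1/n}$ is monotone under positive pointwise comparisons, integrating the displayed inequality over $u$ produces
\[
\||f|^s\|_{BV,d(\R^n)}\le s\,\|f\|_{L^\alpha(O)}^{s-1}\,\|f\|_{\dot{W}_d^{1,p}(O)}.
\]

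\textbf{Lower bound and conclusion of \eqref{eq43}.} The reverse Minkowski inequality for the $L^{-n}(\mathbb S^{n-1})$-expression, used exactly as in the second-to-third-line passage of the (iii)$\Rightarrow$(i) step of Theorem \ref{t41}, gives $\||f|^s\|_{BV,d(\R^n)}\ge\int_0^\infty P_{BV,d}(\{|f|^s>t\})\,dt$. Each super-level set lies inside $O$, so the definition \eqref{eq42} yields $P_{BV,d}(\{|f|^s>t\})\ge\mathsf{h}_{q,d}(O)\,V(\{|f|^s>t\})^{1/q}$. The Adams layer-cake chain at the end of the same (iii)$\Rightarrow$(i) proof, applied to $g=|f|^s$ with exponent $q$, then delivers $\int_0^\infty V(\{|f|^s>t\})^{1/q}\,dt\ge\||f|^s\|_{L^q(\R^n)}=\|f\|_{L^\alpha(O)}^s$. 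Stringing the three pieces together,
\[
\mathsf{h}_{q,d}(O)\,\|f\|_{L^\alpha(O)}^s\le\||f|^s\|_{BV,d(\R^n)}\le s\,\|f\|_{L^\alpha(O)}^{s-1}\,\|f\|_{\dot{W}_d^{1,p}(O)};
\]
dividing by $\|f\|_{L^\alpha(O)}^{s-1}$, taking the infimum over $0\not\equiv f\in W_0^{1,p}(O)$, and invoking the elementary bound $s=\alpha/q\le q^{(1-q)/q}\alpha$ (equivalent to $q^{-1/q}\le 1$ for $q\ge 1$) produces \eqref{eq43}.

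\textbf{Equality at $p=q=1$ and main obstacle.} When $p=q=1$ the parameters collapse to $s=\alpha=1$ and $q^{(1-q)/q}=1$, so the above chain already yields the half $\mathsf{h}_{1,d}(O)\le\Lambda_{1,1,d}(O)$ of \eqref{eq44}. For the reverse inequality, fix a set $D\Subset O$ of finite perimeter and test the quotient with $f_\varepsilon:=1_D\ast\rho_\varepsilon\in C_c^\infty(O)$; Lemma \ref{l31}(ii) together with the uniform-in-$u$ convergence $\|\nabla_u f_\varepsilon\|_{L^1(\R^n)}\to\int_{\partial^\star D}|u\cdot\nu_D|\,dH^{n-1}$ established in the proof of Theorem \ref{t31} forces $\|f_\varepsilon\|_{\dot{W}_d^{1,1}(O)}\to P_{BV,d}(D)$ while $\|f_\varepsilon\|_{L^1(O)}\to V(D)$, so $\Lambda_{1,1,d}(O)\le P_{BV,d}(D)/V(D)$; the inner approximation $D_k:=D\cap\{x:\operatorname{dist}(x,O^c)>1/k\}\cap B_k(0)$, legitimate for a.e.\ $k$ by the coarea formula applied to the Lipschitz distance function, extends the bound to arbitrary $D\subset O$. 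The delicate step is the three-link lower-bound chain (reverse Minkowski in $L^{-n}$, set-by-set Cheeger, Adams layer-cake): the directions of all three inequalities must be aligned simultaneously, and, in the equality case, strict BV-convergence has to be propagated through the $L^{-n}$ integration in the direction variable; all the requisite pieces, however, are already prepared in the proofs of Theorems \ref{t41} and \ref{t31}.
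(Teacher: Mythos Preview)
Your argument is correct and follows the same overall architecture as the paper's proof: both establish the $p=1$ affine inequality via the Minkowski/coarea lower bound and the Cheeger definition, then bootstrap to general $p$ by inserting $|g|^{\alpha-1}g$ (the paper) or, equivalently, testing with $|f|^s$ from the outset (you). The H\"older step, the reverse Minkowski on $\mathbb S^{n-1}$, and the layer-cake passage are identical in substance. Two points deserve comment.

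\emph{Constant bookkeeping.} The Adams chain in the proof of Theorem~\ref{t41}(iii)$\Rightarrow$(i) as written yields $\int_0^\infty V(\{|g|>t\})^{1/q}\,dt\ge q^{-1/q}\|g\|_{L^q}$, not the sharper $\ge\|g\|_{L^q}$ that you invoke. Your stated inequality is nonetheless true (substitute $u=\int_0^t V_s^{1/q}\,ds$ in the penultimate line of that chain and the factor $q$ disappears), so your subsequent weakening via $s\le q^{(1-q)/q}\alpha$ is harmless but unnecessary: with the paper's $q^{-1/q}$ factor the constant $q^{(1-q)/q}\alpha$ appears directly, without any extra step.

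\emph{The reverse inequality in \eqref{eq44}.} Here you diverge from the paper. The paper approximates $1_{O_j}$ by the Lipschitz tent $f_\epsilon(x)=(1-\epsilon^{-1}\operatorname{dist}(x,O_j))_+$ and computes $\|f_\epsilon\|_{\dot W^{1,1}_d(O)}$ explicitly via the nearest-point projection. You instead mollify, $f_\varepsilon=1_D\ast\rho_\varepsilon$, and appeal to strict convergence. Both work; your route is arguably cleaner once one knows (as the paper records in Lemma~\ref{l31}(ii) and its proof) that the directional variations $\int|u\cdot\sigma_{f_\varepsilon}|\,d|Df_\varepsilon|$ converge uniformly in $u$ under strict convergence, not merely for characteristic functions. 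Your inner-approximation step $D_k=D\cap\{\operatorname{dist}(\cdot,O^c)>1/k\}\cap B_k(0)$ to pass from $D\Subset O$ to general $D\subset O$ is the one genuinely loose point: you need $P_{BV,d}(D_k)\to P_{BV,d}(D)$ (or at least $\limsup P_{BV,d}(D_k)/V(D_k)\le P_{BV,d}(D)/V(D)$), which requires an argument that the extra boundary introduced by the cutoff is negligible in the limit; the paper's distance-function construction sidesteps this by never needing $D\Subset O$.
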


\begin{proof} For $f\in C^\infty_c(O)$ and $t>0$ let $O_t(f)=\{x\in O:\ |f(x)|>t\}$ and $u\in\mathbb S^{n-1}$. Then an application of the co-area formula gives
$$
\int_O|\nabla_u f(x)|\,dx=\int_0^\infty\left(\int_{\partial O_t(f)}\Big|u\cdot\frac{\nabla f}{|\nabla f|}\Big|\,dH^{n-1}\right)\,dt.
$$
Note that the projection body $\Pi O_t(f)$ of $O_t(f)$ is a convex set determined by the support function
$$
h_{\Pi O_t(f)}(u)=2^{-1}\int_{\partial^\star O_t(f)}|u\cdot\nu_{O_t(f)}|\,dH^{n-1}\quad\forall\quad u\in\mathbb S^{n-1}.
$$
So, the above two formulas, the Minkowski inequality are utilized and the arguments for Theorem \ref{t41}(iii)$\Rightarrow$(i) are used to imply
\begin{align*}
\|f\|_{\dot{W}^{1,1}_d(O)}&=\left(\int_{\mathbb S^{n-1}}\Big(\int_0^\infty 2h_{\Pi O_t(f)}(u)\,dt\Big)^{-n}\,\frac{du}{n\omega_n}\right)^{-\frac1n}\\
&\ge \int_0^\infty\Big(\int_{\mathbb S^{n-1}}\big(2h_{\Pi O_t(f)}(u)\big)^{-n}\,\frac{du}{n\omega_n}\Big)^{-\frac1n}\,dt\\
&=\int_0^\infty \big(V(O_t(f))\big)^{-\frac1q}\Big(\int_{\mathbb S^{n-1}}\big(2h_{\Pi O_t(f)}(u)\big)^{-n}\,\frac{du}{n\omega_n}\Big)^{-\frac1n} \big(V(O_t(f))\big)^\frac1q\,dt\\
&=\int_0^\infty \big(V(O_t(f))\big)^{-\frac1q}P_{BV,d}(O_t(f)) \big(V(O_t(f))\big)^\frac1q\,dt\\
&\ge \mathsf{h}_{q,d}(O)\int_0^\infty \big(V(O_t)\big)^{\frac1q}\,dt\\
&\ge q^{-\frac1q}\mathsf{h}_{q,d}(O)\|f\|_{L^q(O)}.
\end{align*}
Since $C^\infty_c(O)$ is dense in $W^{1,1}_0(O)$, this last estimation is valid for any $f\in W^{1,1}_0(O)$. When
$$
\begin{cases}
p\in [1,q/(q-1));\\
q\in [1,n/(n-1));\\
\alpha=p/(p-(p-1)q);\\
g\in W^{1,p}_0(O),
\end{cases}
$$
we use the H\"older inequality to obtain
$$
\int_O \big|\nabla_u(|g|^{\alpha-1}g)|\,dx=\alpha\int_O |g|^{\alpha-1}|\nabla_u g|\,dx\le \alpha\left(\int_O|g(x)|^\frac{(\alpha-1)p}{p-1}dx\right)^\frac{p-1}{p}\|\nabla_u g\|_{L^p(O)},
$$
thereby finding
$$
\big\||g|^{\alpha-1}g\big\|_{\dot{W}^{1,1}_d(O)}\le \alpha\left(\int_O|g(x)|^\frac{(\alpha-1)p}{p-1}dx\right)^\frac{p-1}{p}\|g\|_{\dot{W}^{1,p}_d(O)}.
$$
Consequently, we substitute $|g|^{\alpha-1}g$ for $f$ in the above $\|f\|_{\dot{W}^{1,1}_d(O)}$-estimation to gain
$$
\alpha\left(\int_O|g(x)|^\frac{(\alpha-1)p}{p-1}dx\right)^\frac{p-1}{p}\|g\|_{\dot{W}^{1,p}_d(O)}\ge q^{-\frac1q}\big\||g|^\alpha\big\|_{L^q(O)}\mathsf{h}_{q,d}(O),
$$
which, plus $\alpha=p/(p-(p-1)q)$, gives
$$
\mathsf{h}_{q,d}(O)\le q^\frac{1-q}{q}\left(\frac{pq}{p-(p-1)q}\right)\left(\frac{\|g\|_{\dot{W}_d^{1,p}(O)}}{\|g\|_{L^\frac{pq}{p-(p-1)q}(O)}}\right).
$$
Taking the infimum of the last inequality over $g\in W^{1,p}_0(O)\setminus\{0\}$, we obtain \eqref{eq43}.

Note that \eqref{eq43} under $p=q=1$ gives
$$
\Lambda_{1,1,d}(O)\ge\mathsf{h}_{1,d}(O).
$$
Accordingly, it remains to verify the reverse form of the last inequality. To do so, for any natural number $j>1$ we select an $O_j\subseteq O$ such that
$$
{P_{BV,d}(O_j)}\big({V(O_j)}\big)^{-1}<\mathsf{h}_{1,d}(O)+j^{-1}.
$$
Now, given an arbitrarily small $\epsilon>0$ let
$$
\begin{cases}
f_\epsilon(x)=\begin{cases}
0\quad\hbox{as}\quad x\in O\ \&\ \hbox{dist}(x,O_j)\ge\epsilon\\
1-\epsilon^{-1}\hbox{dist}(x,O_j)\quad\hbox{as}\quad x\in O\ \&\  \hbox{dist}(x,O_j)<\epsilon;
\end{cases}\\
O_{j,\epsilon}=\{x\in O: 0<\hbox{dist}(x,O_j)<\epsilon\}.
\end{cases}
$$
Then (cf. \cite[p.195]{Z})
$$
\int_O|\nabla_u f_\epsilon(x)|\,dx=\epsilon^{-1}\int_{O_{j,\epsilon}}|u\cdot\nu(x')|\,dx\quad\forall\quad u\in\mathbb S^{n-1},
$$
where $x'\in\partial O_j$ satisfies
$$
\hbox{dist}(x,O_j)=|x'-x|\quad\&\quad \nu(x')=|x'-x|^{-1}(x'-x).
$$
Note that if $h_{\Pi Q_j}(u)$ is the projection function of $O_j$ as usual, then
$$
\lim_{\epsilon\to 0} \epsilon^{-1}\int_{O_{j,\epsilon}}|u\cdot\nu(x')|\,dx=2h_{\Pi Q_j}(u).
$$
So, a further computation gives
$$
\Lambda_{1,1,d}(O)\le \frac{\|f_\epsilon\|_{\dot{W}^{1,1}(O)}}{\|f_\epsilon\|_{L^1(O)}}\to \frac{P_{BV,d}(O_j)}{V(O_j)}<\mathsf{h}_{1,d}(O)+j^{-1}\quad\hbox{as}\quad\epsilon\to 0.
$$
Now, letting $j\to\infty$ in the above estimation produces
$$
\Lambda_{1,1,d}(O)\le\mathsf{h}_{1,d}(O),
$$
and then \eqref{eq44} follows.
\end{proof}

\end{CJK*}

\end{document}